%
%
%
%

\documentclass[12pt,oneside]{amsart}

\usepackage{latexsym,amssymb,amsmath,graphicx,hyperref}
\usepackage{subfig}
\usepackage{caption}
\textwidth=16.00cm
\textheight=22.00cm
\topmargin=0.00cm
\oddsidemargin=0.00cm
\evensidemargin=0.00cm
\headheight=0cm
\headsep=1cm
\headsep=0.5cm
\numberwithin{equation}{section}
\hyphenation{semi-stable}
\setlength{\parskip}{3pt}

\usepackage{tikz}
\usetikzlibrary{shapes}

\usepackage{fp}
\usepackage{rotating}
\usetikzlibrary{arrows}
\usepackage{longtable}

\newtheorem{theorem}{Theorem}[section]
\newtheorem{lemma}[theorem]{Lemma}
\newtheorem{corollary}[theorem]{Corollary}

\theoremstyle{definition}
\newtheorem{definition}[theorem]{Definition}

\newtheorem{example}[theorem]{Example}

\DeclareMathOperator{\Ind}{Ind}


\newcommand{\PIVAL}{3.14159265358979323846264338} 
\newcounter{i} 
\newcommand{\Circulant}[2] { 
	\begin{tikzpicture}
	\setcounter{i}{0}
	\whiledo{\value{i}<#1}{ 
		\FPmul\tempA{2}{\thei} 
		\FPdiv\tempB{\PIVAL}{#1} 
		\FPmul\tempC{\tempA}{\tempB} 
		\FPcos\varX{\tempC} 
		\FPsin\varY{\tempC} 
		\stepcounter{i} 
		\FPround\varX{\varX}{3}
		\FPround\varY{\varY}{3}
		\node (\thei) at (\varX,\varY)[place]{ }; 
		\foreach \x in {#2} { 
			\pgfmathparse{mod(\x+\thei,#1)} 
			\let\tempB\pgfmathresult
			\pgfmathparse{mod(\thei-\x,#1)} 
			\let\tempA\pgfmathresult
			\ifthenelse{\lengthtest{\tempA pt < 1 pt}}{\FPadd\tempA{\tempA}{#1}}{}
			\ifthenelse{\lengthtest{\tempB pt < 1 pt}}{\FPadd\tempB{\tempB}{#1}}{}
			\ifthenelse{\lengthtest{\tempA pt > \thei pt}}{}{\ifthenelse{\thei = \tempA}{}{\draw [] (\thei) to (\tempA)}};
			\ifthenelse{\lengthtest{\tempB pt > \thei pt}}{}{\ifthenelse{\thei = \tempB}{}{\draw [] (\thei) to (\tempB)}};
		}
	}
	\end{tikzpicture}
}


\begin{document}

\tikzstyle{place}=[draw,circle,minimum size=0.5mm,inner sep=1pt,outer sep=-1.1pt,fill=black]


\title{Cohen-Macaulay Circulant Graphs}
\thanks{Last updated: \today}
\thanks{Research of first two authors supported in part by NSERC Discovery Grants.
Research of the third author supported by an NSERC USRA}

\author{Kevin N. Vander Meulen}
\address{Department of Mathematics\\
Redeemer University College, Ancaster, ON, L9K 1J4, Canada}
\email{kvanderm@redeemer.ca}

\author{Adam Van Tuyl}
\address{Department of Mathematical Sciences\\
Lakehead University, Thunder Bay, ON, P7B 5E1, Canada}
\email{avantuyl@lakeheadu.ca}

\author{Catriona Watt}
\address{Department of Mathematics\\
Redeemer University College, Ancaster, ON, L9K 1J4, Canada}
\email{cwatt@redeemer.ca}

\keywords{Cohen-Macaulay, circulant graph, well-covered graph, vertex decomposable, shellable}
\subjclass[2010]{13F55, 13H10,  05C75,    05E45}

\begin{abstract}
Let $G$ be the circulant graph $C_n(S)$ with $S \subseteq \{1,2,\ldots,
\lfloor \frac{n}{2} \rfloor\}$, and let $I(G)$ denote its the edge ideal
in the ring $R = k[x_1,\ldots,x_n]$.  We consider the problem of determining
when $G$ is Cohen-Macaulay, i.e, $R/I(G)$ is a Cohen-Macaulay ring.  
Because a Cohen-Macaulay graph $G$ must be well-covered, 
we focus on known families of well-covered circulant graphs of the form 
$C_n(1,2,\ldots,d)$.  
We also characterize which cubic circulant graphs are Cohen-Macaulay. 
We end with the observation that even though the well-covered
property is preserved under lexicographical products of graphs, this
is not true of the Cohen-Macaulay property.
\end{abstract}

\maketitle

\section{Introduction}\label{sec:intro}

Let $G = (V_G,E_G)$ denote a finite simple graph on the vertex set $V_G = \{x_1,\ldots,x_n\}$
with edge set $E_G$.   By identifying the vertices of $G$ with the variables of
the polynomial ring $R = k[x_1,\ldots,x_n]$ (here, $k$ is any field), we can associate
to $G$ the quadratic square-free monomial ideal 
\[I(G) = \langle x_ix_j ~|~ \{x_i,x_j\} \in E_G \rangle \subseteq R\]  
called the {\it edge ideal} of $G$.  Edge ideals were first introduced by 
Villarreal \cite{V}.  During the last couple of years, there has been an interest
in determining which graphs $G$ are {\it Cohen-Macaulay}, that is, determining when the ring 
$R/I(G)$ is a Cohen-Macaulay ring solely from the properties of the graphs.  
Although this problem is probably intractable for arbitrary graphs,
results are known for some families of graphs, e.g., chordal graphs \cite{HHZ} and 
bipartite graphs \cite{HH}.  Readers may also be interested in the recent
survey of Morey and Villarreal
\cite{MV} and the textbook of Herzog and Hibi
\cite{HHbook}, especially Chapter 9. 

Our goal is to identify families of circulant graphs
that are Cohen-Macaulay.  Given an integer $n \geq 1$ and a subset
$S \subseteq \{1,2,\ldots,\lfloor \frac{n}{2} \rfloor \}$, the {\it circulant graph}
$C_n(S)$ is the graph on $n$ vertices $\{x_1,\ldots,x_n\}$ 
such that $\{x_i, x_j\}$ is an edge of $C_n(S)$ if and only if 
$\min\{ \vert i-j \vert, n-\vert i-j \vert \}\in S$.
See, for example, the graph $C_{12}(1,3,4)$ in Figure ~\ref{C12}.
\begin{figure}[h]
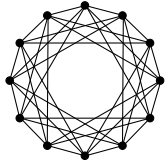

\[\Circulant{12}{1,3,4}\]
\caption{The circulant graph $C_{12}(1,3,4)$.}\label{C12}
\end{figure}
For convenience of notation, we suppress the set brackets
for the set $S=\{1,3,4\}$ in $C_{12}(1,3,4)$. 
Circulant graphs belong to the family of Cayley graphs
and are sometimes viewed as generalized
cycles since $C_n = C_n(1)$.  
The complete graph is also a circulant
graph because  $K_n = C_n(1,2,\ldots,\lfloor \frac{n}{2} \rfloor)$.
In the literature, circulant graphs have appeared
in a number of applications related to networks \cite{BCH}, 
error-correcting codes \cite{ST}, and even music \cite{BHmusic},
in part, because of their regular structure (see \cite{Brown11}).  

To classify families of Cohen-Macaulay circulant 
graphs we will use the fact that all Cohen-Macaulay
graphs must be well-covered.  A graph $G$ is \emph{well-covered} if all the maximal independent
sets of $G$ have the same cardinality, equivalently, every maximal independent set is
a maximum independent set (see the survey
of Plummer \cite{P}). From an algebraic point-of-view, when a graph $G$
is well-covered, the edge ideal is $I(G)$ is unmixed, that
is, all of its associated primes have the same height.  Some families
of well-covered circulant graphs were recently classified by Brown and Hoshino \cite{Brown11}.
Our main results (see Theorems \ref{mainthm} and \ref{cubicmainthm}) refine the work
of Brown and Hoshino by determining which of these well-covered circulant graphs
are also Cohen-Macaulay.   In particular we show in
Theorem~\ref{mainthm} that for $n\geq 2d\geq 2$, the circulant
 $C_n(1,2,\ldots,d)$ is Cohen-Macaulay if and only if $n \leq 3d+2$ and $n \neq 2d+2$.
We also show that the Cohen-Macaulay graphs $C_n(1,2,\ldots,d)$ are 
in fact vertex decomposable and shellable. 
Although 
the well-covered circulant graphs $C_{2d+2}(1,2,\ldots,d)$ and $C_{4d+3}(1,2,\ldots,d)$ are 
not Cohen-Macaulay, we 
prove that these graphs are Buchsbaum (see Theorem~\ref{buchsbaum}).  We also
classify which cubic circulant graphs are Cohen-Macaulay (see 
Theorem \ref{CMcubic}).

Our paper is structured as follows.  In Section 2 we recall the relevant background
regarding graph theory and simplicial complexes.  In Section 3 we classify
the Cohen-Macaulay graphs of the form $C_n(1,2,\ldots,d)$ with $n \geq 2d$.
Section 4 contains the proof of a lemma needed to prove the main result of Section 3.
In Section 5, we look at cubic circulant graphs, and classify those that are 
Cohen-Macaulay.  Section 6
contains some concluding comments and open questions related to the lexicographical
product of graphs.


\section{Background Definitions and Results}\label{sec:background}

A {\it simplicial complex} $\Delta$ on a vertex set $V = \{x_1,\ldots,x_n\}$
is a set of subsets of $V$ that satisfies:
$(i)$ if $F \in \Delta$ and $G \subseteq F$, then $G \in \Delta$, and $(ii)$
for each $i=1,\ldots,n$, $\{x_i\} \in \Delta$.  Note that condition $(i)$ implies that
$\emptyset \in \Delta$.  The elements of $\Delta$ are called
its {\it faces}.  The maximal elements of $\Delta$, with respect to inclusion,
are the {\it facets} of $\Delta$.  

The {\it dimension} of a face $F \in \Delta$
is given by $\dim F = |F|-1$;  the {\it dimension} of a simplicial complex, denoted
$\dim \Delta$, is the maximum dimension of all its faces.  We call $\Delta$
a {\it pure} simplicial complex if all its facets have the same dimension.  Let $f_i$ be
the number of faces of $\Delta$ of dimension $i$, with the convention that $f_{-1} = 1$.
If $\dim \Delta = D$, then the {\it $f$-vector} of $\Delta$ 
is the $(D+2)$-tuple $f(\Delta) = (f_{-1},f_0,f_1,\ldots,f_D)$.  The \emph{$h$-vector} 
of $\Delta$ 
is the $(D+2)$-tuple 
$h(\Delta)=(h_0, h_1, \dots, h_{D+1})$ with (see \cite[Theorem 5.4.6]{Vbook}) 
\[
h_k = \sum_{i = 0}^k (-1)^{k-i}{D+1-i \choose k-i}f_{i-1} \, .
\]

Given any simplicial complex $\Delta$ on $V$, we can associate to $\Delta$
a monomial ideal $I_{\Delta}$ in the polynomial ring $R=k[x_1,\ldots,x_n]$ (with
$k$ a field) as follows:
\[I_{\Delta} = \langle \{x_{j_1}x_{j_2}\cdots x_{j_r} ~|~ \{x_{j_1},\ldots,x_{j_r}\} 
\not\in \Delta\}\rangle.\]
The ideal $I_{\Delta}$ is commonly called the {\it Stanley-Reisner ideal} of $\Delta$,
and the quotient ring $R/I_{\Delta}$ is the {\it Stanley-Reisner ring} of $\Delta$.

We say that $\Delta$ is {\it Cohen-Macaulay} (over $k$) if its Stanley-Reisner
ring $R/I_{\Delta}$ is a Cohen-Macaulay ring,  that is, 
$\operatorname{K-dim}(R/I_{\Delta}) = {\rm depth}(R/I_{\Delta})$.  Here 
$\operatorname{K-dim}(R/I_{\Delta})$, the {\it Krull dimension}, 
is the length of the longest chain of prime ideals
in $R/I_{\Delta}$ with strict inclusions, 
and ${\rm depth}(R/I_{\Delta})$, the
{\it depth}, is length of the longest sequence $f_1,\ldots,f_j$ in $\langle x_1,\ldots,x_n
\rangle$ that forms a regular sequence on $R/I_{\Delta}$.

We review the required background on reduced homology;  see \cite{MS} for complete details.
To any simplicial complex $\Delta$ with $f(\Delta) = (f_{-1},f_0,\ldots,f_D)$ we can associate a
reduced chain complex over $k$:
\[\tilde{C}_{\cdot}(\Delta;k): 0 \longleftarrow k^{f_{-1}} 
\stackrel{\partial_0}{\longleftarrow} k^{f_0}
\stackrel{\partial_1}{\longleftarrow} k^{f_1}
\stackrel{\partial_2}{\longleftarrow} \cdots 
\stackrel{\partial_D}{\longleftarrow}  k^{f_D}
\leftarrow 0 .\]
Here 
$k^{f_i}$ is the vector space with basis elements $e_{j_0,j_1,\ldots,j_i}$ corresponding
to the $i$-dimensional faces of $\Delta$.  We assume $j_0< j_1 < \cdots < j_i$.
 The boundary maps $\partial_i$ are given by
\[\partial_i(e_{j_0,j_1,\ldots,j_i}) = e_{\hat{j}_0,j_1,\ldots,j_i} -
e_{j_0,\hat{j}_1,\ldots,j_i} + 
e_{j_0,j_1,\hat{j}_2\ldots,j_i} + \cdots + (-1)^{i}
e_{j_0,j_1,\ldots,\hat{j}_i}\]
where $~\hat{}~$ denotes an omitted term.  The $i$th {\it reduced simplicial homology} of $\Delta$ 
with coefficients in $k$
is the $k$-vector space 
\[\tilde{H}_i(\Delta;k) = \operatorname{ker}(\partial_i)/\operatorname{im}(\partial_{i+1}).\]
The dimensions of $\tilde{H}_i(\Delta;k)$
are related to $f(\Delta)$  via the {\it reduced Euler
characteristic}:
\begin{equation}\label{euler}
\sum_{i=-1}^D (-1)^i \dim_k \tilde{H}_i(\Delta;k) = \sum_{i=-1}^{D} (-1)^i f_i.
\end{equation}

We will find it convenient to use Reisner's Criterion.
Given a face $F \in \Delta$, the {\it link} of $F$ in $\Delta$ is the
subcomplex
\[{\rm link}_{\Delta}(F) = \{G \in \Delta ~|~ F \cap G = \emptyset,~ F\cup G \in \Delta\}.\]

\begin{theorem}[Reisner's Criterion]\label{reisner} 
Let $\Delta$ be a simplicial complex on $V$.  Then
$R/I_{\Delta}$ is Cohen-Macaulay over $k$ if and only if
for all $F \in \Delta$,
$\tilde{H}_i({\rm link}_{\Delta}(F);k) = 0$ for all $i < \dim
{\rm link}_{\Delta}(F)$.
\end{theorem}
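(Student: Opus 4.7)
The plan is to derive this classical criterion by combining two pieces of commutative algebra: (i) a finitely generated graded $R$-module $M$ is Cohen-Macaulay if and only if the local cohomology modules $H^i_{\mathfrak{m}}(M)$ vanish for all $i < \dim M$, and (ii) Hochster's formula expressing the $\mathbb{Z}^n$-graded pieces of $H^i_{\mathfrak{m}}(R/I_\Delta)$ in terms of reduced simplicial homology of links in $\Delta$. Apply (i) to $M = R/I_\Delta$, noting that $\dim(R/I_\Delta) = \dim \Delta + 1$, to reduce the theorem to proving that the vanishing of $H^i_{\mathfrak{m}}(R/I_\Delta)$ in degrees $i < \dim \Delta + 1$ is equivalent to the stated homological condition on all links.

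The crucial input is Hochster's formula, which I would establish by computing $H^\bullet_{\mathfrak{m}}(R/I_\Delta)$ via the $\mathbb{Z}^n$-graded \v{C}ech complex on $x_1, \ldots, x_n$. After passing to a fixed multidegree $\mathbf{a} \in \mathbb{Z}^n$, a direct analysis shows the $\mathbf{a}$-graded strand is nonzero only when $\mathbf{a} \leq 0$ and $F := \{i : a_i < 0\}$ is a face of $\Delta$ (with $a_j = 0$ for $j \notin F$). When this happens, the resulting complex of $k$-vector spaces is, up to a homological shift, the reduced simplicial chain complex of ${\rm link}_{\Delta}(F)$, yielding
\[
\dim_k H^i_{\mathfrak{m}}(R/I_\Delta)_{\mathbf{a}} = \dim_k \tilde{H}_{i - |F| - 1}({\rm link}_{\Delta}(F); k).
\]

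Given Hochster's formula, the equivalence in the theorem is a re-indexing exercise. The condition $H^i_{\mathfrak{m}}(R/I_\Delta) = 0$ for all $i < \dim \Delta + 1$ becomes, after setting $j = i - |F| - 1$, the condition $\tilde{H}_j({\rm link}_{\Delta}(F); k) = 0$ for every face $F$ and every $j < \dim \Delta - |F|$. Since $\dim {\rm link}_{\Delta}(F) \leq \dim \Delta - |F|$ and reduced homology in degrees above $\dim {\rm link}_{\Delta}(F)$ is automatically zero, this condition matches the stated vanishing $\tilde{H}_i({\rm link}_{\Delta}(F); k) = 0$ for $i < \dim {\rm link}_{\Delta}(F)$; one uses in addition that Cohen-Macaulayness forces $\Delta$ to be pure (a consequence of applying Hochster's formula to the top local cohomology together with the dimension computation), so that $\dim {\rm link}_{\Delta}(F) = \dim \Delta - |F|$ for every face $F$.

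The main obstacle is Hochster's formula itself: carefully identifying the graded strands of the \v{C}ech complex with the reduced chain complexes of links, matching up the boundary maps and signs correctly. Once this identification is in place, the theorem follows cleanly from the vanishing characterization of Cohen-Macaulay modules combined with the re-indexing described above.
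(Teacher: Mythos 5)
The paper does not prove this theorem; it is stated as the classical Reisner criterion and used as a black box, so there is no internal proof to compare against. Your outline is the standard textbook derivation (Grothendieck's vanishing characterization of Cohen--Macaulayness via local cohomology, plus Hochster's formula computed from the $\mathbb{Z}^n$-graded \v{C}ech complex), and the re-indexing $j = i - |F| - 1$ translating $H^i_{\mathfrak{m}}(R/I_\Delta)=0$ for $i<\dim\Delta+1$ into $\tilde{H}_j({\rm link}_\Delta(F);k)=0$ for $j<\dim\Delta-|F|$ is correct.

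There is, however, one logical gap in how you close the re-indexing step. The condition you get from local cohomology is vanishing for $j<\dim\Delta-|F|$, whereas the theorem asserts vanishing for $j<\dim{\rm link}_\Delta(F)$, and these ranges agree only when $\Delta$ is pure. You justify purity by saying ``Cohen--Macaulayness forces $\Delta$ to be pure,'' but that only serves the direction (CM) $\Rightarrow$ (link condition). In the converse direction you are \emph{assuming} the link condition and trying to prove Cohen--Macaulayness, so you cannot invoke Cohen--Macaulayness to obtain purity; you need the separate combinatorial lemma that the link-vanishing condition itself forces $\Delta$ to be pure. This matters: if $F$ were a facet with $|F|<\dim\Delta+1$, then ${\rm link}_\Delta(F)=\{\emptyset\}$ has $\tilde{H}_{-1}=k\neq 0$ while $-1<\dim\Delta-|F|$, so the local cohomology condition would fail even though the stated link condition is vacuously satisfied for that $F$. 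The standard fix is an induction on $\dim\Delta$: the link condition passes to links of vertices (links of faces of ${\rm link}_\Delta(\{v\})$ are links of larger faces of $\Delta$), the case $F=\emptyset$ gives connectedness of $\Delta$ when $\dim\Delta\geq 1$, and these combine to show all facets have the same dimension. With that lemma supplied, your argument is complete.
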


 For any vertex $x \in V$, the {\it deletion} of $x$ in 
$\Delta$ is the subcomplex
\[{\rm del}_\Delta(\{x\}) = \{F \in \Delta ~|~ x \not\in F\}.\]
The following combinatorial topology property 
was 
introduced by Provan and Billera \cite{BP}.

\begin{definition}
Let $\Delta$ be a pure simplicial complex.  Then $\Delta$ is {\it vertex decomposable}
if 
\begin{enumerate}
\item[$(i)$] $\Delta$ is a simplex, i.e $\{x_1,\ldots,x_n\}$ is the unique maximal facet, or
\item[$(ii)$] there exists an  $x \in V$ such that
${\rm link}_{\Delta}(\{x\})$ and ${\rm del}_{\Delta}(\{x\})$ are vertex decomposable.
\end{enumerate}
\end{definition}

We will also refer to the following family of simplicial complexes.

\begin{definition}
Let $\Delta$ be a pure simplicial complex with facets $\{F_1,\ldots,F_t\}$.  Then 
$\Delta$ is {\it shellable} if there exists an ordering of $F_1,\ldots,F_t$ such that
such that for all $1 \leq j < i \leq t$, there is some $x \in F_i \setminus F_j$
and some $k \in \{1,\ldots,j-1\}$ such that $\{x\} = F_i \setminus F_k$.
\end{definition}

The following theorem summarizes a number of necessary and sufficient conditions of
Cohen-Macaulay simplicial complexes that we require. 

\begin{theorem} \label{properties}
Let $\Delta$ be a simplicial complex on the vertex set $V = \{x_1,\ldots,x_n\}$.
\begin{enumerate}
\item[$(i)$] If $\Delta$ is Cohen-Macaulay, then it is pure.
\item[$(ii)$] If $\Delta$ is Cohen-Macaulay, then $h(\Delta)$ has only non-negative
entries.
\item[$(iii)$] If $n - {\rm pdim}(R/I_{\Delta}) = \operatorname{K-dim}(R/I_{\Delta})$, then $\Delta$
is Cohen-Macaulay (here, ${\rm pdim}(R/I_{\Delta})$ denotes the projective dimension
of $R/I_{\Delta}$, the length of a minimal free resolution of $R/I_{\Delta}$).
\item[$(iv)$] If $\Delta$ is vertex decomposable, then $\Delta$ is Cohen-Macaulay.
\item[$(v)$] If $\dim \Delta =0$, then $\Delta$ is vertex decomposable/shellable/Cohen-Macaulay.
\item[$(vi)$] If $\dim \Delta =1$, then $\Delta$ is 
vertex decomposable/shellable/Cohen-Macaulay if and only if $\Delta$ is connected.
\end{enumerate}
\end{theorem}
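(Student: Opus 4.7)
The plan is to verify each of the six items via classical results from Stanley--Reisner theory, handled either by a brief argument invoking Reisner's criterion (Theorem \ref{reisner}) or by a citation to standard references such as \cite{Vbook} or \cite{HHbook}. I organize the write-up into three clusters: the algebraic necessary conditions $(i)$--$(iii)$, the main combinatorial sufficient condition $(iv)$, and the low-dimensional verifications $(v)$--$(vi)$.

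For $(i)$, I would argue by contradiction: if $\Delta$ had a facet $F$ with $\dim F < \dim\Delta$, then ${\rm link}_\Delta(F) = \{\emptyset\}$ has dimension $-1$ while Reisner's criterion demands vanishing reduced homology in degrees up to $\dim\Delta - \dim F - 1$, producing a contradiction; this is the purity part of Reisner's theorem and is standard. For $(ii)$, I would invoke Stanley's theorem that $h(\Delta)$ equals the Hilbert function of an Artinian reduction of $R/I_\Delta$, which is automatically non-negative when $R/I_\Delta$ is Cohen-Macaulay. Part $(iii)$ is an immediate application of the Auslander--Buchsbaum formula: since $R$ is regular of depth $n$, one has ${\rm pdim}(R/I_\Delta) + {\rm depth}(R/I_\Delta) = n$, and the hypothesis forces ${\rm depth}(R/I_\Delta) = \operatorname{K-dim}(R/I_\Delta)$, which is the definition of Cohen-Macaulayness.

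For $(iv)$, this is the Provan--Billera theorem \cite{BP}; the proof proceeds by induction on $|V|$, using a shedding vertex $x$ to reduce to the Cohen-Macaulayness of ${\rm link}_\Delta(\{x\})$ and ${\rm del}_\Delta(\{x\})$, and checking Reisner's criterion on $\Delta$ via a Mayer--Vietoris argument relating the links in $\Delta$ to those in the two subcomplexes. Part $(v)$ is elementary: a $0$-dimensional $\Delta$ is a finite set of isolated vertices, trivially vertex decomposable and shellable under any vertex ordering, with $R/I_\Delta$ of Krull dimension and depth both equal to $1$ (depth witnessed by the linear form $x_1 + \cdots + x_n$).

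Finally, for $(vi)$, a pure $1$-dimensional $\Delta$ corresponds to a graph with no isolated vertices. Reisner's criterion reduces to the single condition $\tilde{H}_0(\Delta;k) = 0$, i.e.\ connectedness, since vertex links are $0$-dimensional and impose no constraints; conversely, disconnectedness forces $\tilde{H}_0 \neq 0$ and violates Reisner at $F = \emptyset$. Vertex decomposability (and hence shellability) follows by induction on $|V|$ once one exhibits a vertex $x$ whose deletion remains pure and whose link is vertex decomposable by $(v)$. The main obstacle is the existence of such $x$: if $G$ has a leaf one uses the leaf (so that the deletion loses only a pendant edge), and otherwise the minimum degree is at least two and any non-cut vertex (which exists by standard graph theory) has the property that its removal keeps every remaining vertex incident to an edge; the base cases $|V| \leq 2$ are treated directly as simplices.
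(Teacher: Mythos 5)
Your proposal is correct and takes essentially the same route as the paper, whose proof simply cites these standard facts (Villarreal's book for $(i)$--$(iii)$, the Auslander--Buchsbaum formula for $(iii)$, and Provan--Billera for $(iv)$--$(vi)$); your sketches for $(iv)$--$(vi)$ are the standard arguments behind those citations. One small caveat on $(i)$: the form of Reisner's criterion stated in the paper (Theorem \ref{reisner}) requires vanishing only for $i < \dim {\rm link}_{\Delta}(F)$, so a facet $F$ of non-maximal dimension has ${\rm link}_{\Delta}(F) = \{\emptyset\}$ of dimension $-1$ and imposes no condition at all; your contradiction needs the equivalent formulation with the bound $i < \dim\Delta - |F|$ (or, as in the cited \cite[Theorem 5.3.12]{Vbook}, the unmixedness of Cohen--Macaulay rings), since deducing purity from the paper's stated form alone is not immediate.
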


\begin{proof}  Many of these results are standard.  For $(i)$ see 
\cite[Theorem 5.3.12]{Vbook}; for $(ii)$ see \cite[Theorem 5.4.8]{Vbook}; $(iii)$ follows
from the Auslander-Buchsbaum Theorem; for $(iv)$ see \cite[Corollary 2.9]{BP} and the 
fact that shellable implies Cohen-Macaulay \cite[Theorem 5.3.18]{Vbook}; $(v)$ is 
\cite[Proposition 3.1.1]{BP}; and $(vi)$ is \cite[Theorem 3.1.2]{BP}.
\end{proof}

In this paper, we will be interested in independence complexes of 
finite simple graphs $G = (V_G,E_G)$.   We say that a set of vertices  $W \subseteq V_G$
is an {\it independent set} if for all $e \in E_G$, $e \not\subseteq W$.  The {\it independence
complex} of $G$ 
is the set of all independent sets:
\[{\rm Ind}(G) = \{W ~|~ \mbox{$W$ is an independent set of $V_G$}\}.\]
The set ${\rm Ind}(G)$ is a simplicial complex.  Following
convention, $G$ is Cohen-Macaulay (resp. shellable,
vertex decomposable) if ${\rm Ind}(G)$ is
Cohen-Macaulay (resp. shellable, vertex decomposable).

The facets of ${\rm Ind}(G)$ correspond to the {\it maximal independent sets} of vertices of $G$.
It is common
to let $\alpha(G)$ denote the cardinality of a maximum independent set of vertices in $G$.
A graph $G$ is {\it well-covered} if every maximal independent set has cardinality $\alpha(G)$.
Moreover, a direct translation of the definitions gives us:

\begin{lemma} \label{wellcovered}
If $G$ is Cohen-Macaulay, then
$G$ is well-covered.
\end{lemma}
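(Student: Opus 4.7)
The plan is to unwind the definitions and apply Theorem~\ref{properties}(i), which says that a Cohen-Macaulay simplicial complex is pure. This is essentially a translation exercise between the combinatorial language of independence complexes and the graph-theoretic language of well-covered graphs.

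First I would recall that, by the convention established just before the lemma, saying $G$ is Cohen-Macaulay means exactly that the independence complex $\mathrm{Ind}(G)$ is Cohen-Macaulay as a simplicial complex. Applying Theorem~\ref{properties}(i) immediately yields that $\mathrm{Ind}(G)$ is pure, i.e., all of its facets have the same dimension.

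Next I would use the identification, noted in the paragraph preceding the lemma, between the facets of $\mathrm{Ind}(G)$ and the maximal independent sets of $G$. Since $\dim F = |F|-1$ for any face $F$, purity of $\mathrm{Ind}(G)$ translates directly into the statement that all maximal independent sets of $G$ have the same cardinality. By the definition of well-coveredness recalled in Section~\ref{sec:intro}, this is precisely the conclusion that $G$ is well-covered.

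There is no real obstacle here; the lemma is genuinely a definition-chase, with the only nontrivial input being the purity theorem for Cohen-Macaulay complexes, which is stated as Theorem~\ref{properties}(i). The proof should be a couple of sentences at most.
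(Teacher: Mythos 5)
Your proof is correct and matches the paper's intent exactly: the paper presents this lemma as a "direct translation of the definitions," relying on Theorem~\ref{properties}(i) (Cohen-Macaulay implies pure) together with the correspondence between facets of $\mathrm{Ind}(G)$ and maximal independent sets of $G$. Nothing further is needed.
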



\section{Characterization of Circulant graphs $C_n(1,2,\ldots,d)$.}\label{characterization}

In this section, we classify which circulant graphs of the form $C_n(1,2,\ldots,d)$ are 
Cohen-Macaulay.
Brown and Hoshino recently classified the well-covered graphs in this family:

\begin{theorem}\label{Brown4.1}
{\rm({\cite[Theorem 4.1]{Brown11}})}. Let $n$ and $d$ be integers with $n \geq 2d \geq 2$. 
Then $C_n(1,2,\ldots,d)$ is well-covered if and only if $n\leq 3d+2$ or $n=4d+3$.
\end{theorem}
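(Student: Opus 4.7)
The plan is to translate the well-covered condition into a combinatorial statement about cyclic compositions of $n$, and then carry out a short case analysis. Identifying the vertices with $\mathbb{Z}/n\mathbb{Z}$, I observe that a subset $\{x_{a_1},\ldots,x_{a_k}\}$ written in increasing cyclic order is an independent set of $C_n(1,2,\ldots,d)$ if and only if every consecutive cyclic gap $g_i = a_{i+1}-a_i \pmod n$ satisfies $g_i \geq d+1$; such a set is maximal if and only if in addition every $g_i \leq 2d+1$, since a gap of size at least $2d+2$ admits a new vertex at cyclic distance $\geq d+1$ from both of its endpoints, while a smaller gap does not. Thus maximal independent sets correspond bijectively (up to cyclic rotation) to compositions $(g_1,\ldots,g_k)$ of $n$ with every part in $[d+1,2d+1]$, and $C_n(1,2,\ldots,d)$ is well-covered if and only if every such composition has the same number of parts.

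A composition with $k$ parts exists precisely when $k(d+1) \leq n \leq k(2d+1)$, so the set of realized values of $k$ is the integer interval $I_n = [\lceil n/(2d+1)\rceil,\lfloor n/(d+1)\rfloor]$, and well-coveredness is the assertion $|I_n|=1$. I would then check five cases. For $n \in \{2d,2d+1\}$ the graph is complete and $I_n = \{1\}$. For $2d+2 \leq n \leq 3d+2$ a direct check gives $I_n = \{2\}$. For $n = 4d+3$ the condition $k(d+1) \leq 4d+3 \leq k(2d+1)$ forces $k = 3$, because $k=2$ would require $n \leq 4d+2$ and $k=4$ would require $n \geq 4d+4$. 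For $3d+3 \leq n \leq 4d+2$ the compositions $(\lceil n/2\rceil,\lfloor n/2\rfloor)$ and $(d+1,d+1,n-2d-2)$ both have parts in $[d+1,2d+1]$, exhibiting $\{2,3\} \subseteq I_n$.

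For the remaining range $n \geq 4d+4$, write $n = q(d+1)+r$ with $0 \leq r \leq d$. The composition $(d+1,\ldots,d+1,d+1+r)$ of length $q$ realizes $k = q = \lfloor n/(d+1)\rfloor \geq 4$. To exhibit a shorter maximal independent set I would show that $k = q-1$ is also realized: the only nontrivial inequality to check is $(q-1)(2d+1) \geq n$, which rearranges to $q \geq (r+2d+1)/d$, and the bound $r \leq d$ together with $q \geq 4$ forces $(r+2d+1)/d \leq 3+1/d \leq 4 \leq q$, so the required composition exists and can be written down explicitly. The main obstacle will be this final case: packaging the arithmetic so that the two exhibited gap sequences really do have all parts in $[d+1,2d+1]$ for every $d \geq 1$ and every residue $r \in [0,d]$, particularly at the boundary when $d=1$ or $r \in \{0,d\}$. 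The remaining cases reduce to direct estimates on the endpoints of $I_n$.
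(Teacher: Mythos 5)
Your proposal is correct, but note that the paper itself gives no proof of this statement: it is quoted verbatim from Brown and Hoshino \cite[Theorem 4.1]{Brown11}, so there is nothing internal to compare against. Your gap-sequence argument is a clean, self-contained derivation. The encoding is right: for $k\ge 2$, independence is equivalent to every cyclic gap being at least $d+1$ (each pairwise cyclic distance is a minimum of two nonempty sums of gaps), and maximality is equivalent to every gap being at most $2d+1$. The one step you state without justification is that a vertex sitting inside a gap of length at least $2d+2$, at distance at least $d+1$ from both endpoints of that gap, is automatically at cyclic distance at least $d+1$ from \emph{every} vertex of the set, not just the two endpoints; this is true (both arcs from the new vertex to any other chosen vertex contain one of the two endpoint-distances as a summand) and should be recorded explicitly. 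The arithmetic in all five cases checks out, including the boundary case $d=1$, $q=4$, $r=1$ (i.e., $n=9$), where $(q-1)(2d+1)=n$ holds with equality and the composition $(3,3,3)$ still works. Your interval criterion $\lceil n/(2d+1)\rceil=\lfloor n/(d+1)\rfloor$ also meshes well with Lemma \ref{independencepolynomial} of the paper, which gives $\dim\mathrm{Ind}(G)=\lfloor n/(d+1)\rfloor-1$; in the well-covered range this is exactly the common size of the maximal independent sets minus one, as it must be.
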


Brown and Hoshino's result is a key ingredient for our main result.
We also need one additional result of \cite{Brown11} on the independence
polynomial of $C_n(1,2,\ldots,d)$, but translated into
a statement about $f$-vectors.    The {\it independence polynomial} of a graph $G$
is given by $I(G,x) = \sum_{k=0}^{\alpha(G)} i_kx^k$ where $i_k$ is the number of independent
sets of cardinality $k$ (we take $i_0 = 1$).  Note that if $\Delta = {\rm Ind}(G)$
and $f(\Delta) = (f_{-1},f_0,\ldots,f_D)$, then $i_k = f_{k-1}$ for each $k$.  If
we translate \cite[Theorem 3.1]{Brown11} into the language of $f$-vectors and independence
complexes, we get the following statement.

\begin{lemma}  \label{independencepolynomial}
Let $n$ and $d$ be integers with $n \geq 2d \geq 2$, $G = C_n(1,2,\ldots,d)$, and
$D = \dim {\rm Ind}(G)$.
Then $D = \left\lfloor \frac{n}{d+1} \right\rfloor - 1$ and $f(\Delta) = (f_{-1},f_0,\ldots,f_D)$ where
\[f_{k-1} = \frac{n}{n-dk}\binom{n-dk}{k}~~\mbox{for $k=0,\ldots,(D+1)$.}\]
\end{lemma}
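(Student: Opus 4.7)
The plan is to count the $k$-element independent sets of $G = C_n(1,2,\ldots,d)$ directly. First, I would translate the independence condition: a set $W = \{x_{i_1},\ldots,x_{i_k}\}$ is independent in $G$ if and only if every pair of indices satisfies $\min(|i_r - i_s|, n - |i_r - i_s|) > d$. Arranging the chosen vertices cyclically and letting $g_1,\ldots,g_k$ denote the successive arc lengths between consecutive chosen vertices, this is equivalent to requiring each $g_j \geq d+1$ with $\sum_j g_j = n$; a short check confirms that this ``consecutive gap'' condition forces the cyclic distance between every pair of chosen vertices to be at least $d+1$, since both arcs between two chosen vertices are sums of consecutive gaps.

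Next, I would count the ordered gap tuples. Writing $g_j = (d+1) + h_j$ with $h_j \geq 0$, the condition $\sum g_j = n$ becomes $\sum h_j = n - k(d+1)$, which has $\binom{n - kd - 1}{k-1}$ solutions. To upgrade gap tuples to labeled independent sets, I would pair each tuple with a choice of position $j \in \{1,\ldots,n\}$ for the ``first'' chosen vertex, yielding $n \cdot \binom{n - kd - 1}{k-1}$ (tuple, position) pairs, with each independent set counted $k$ times (once per chosen vertex designated first). Hence
\[
f_{k-1} = \frac{n}{k}\binom{n - kd - 1}{k - 1},
\]
and a routine rewrite gives $f_{k-1} = \frac{n}{n - dk}\binom{n - dk}{k}$. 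The case $k=0$ correctly yields $f_{-1}=1$.

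Finally, the dimension formula follows because $f_{k-1}$ is strictly positive precisely when $n - dk \geq k$, i.e., $k \leq \lfloor n/(d+1) \rfloor$, so the largest cardinality of an independent set is $\lfloor n/(d+1) \rfloor$ and $D = \lfloor n/(d+1) \rfloor - 1$.

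The main obstacle, as with any Kaplansky-style cycle count, is justifying the overcounting factor $k$ carefully — equivalently, being precise about whether a ``gap sequence'' is treated as cyclically ordered or linearly ordered. Alternatively, one could bypass this bookkeeping by simply specializing \cite[Theorem 3.1]{Brown11}, which computes the full independence polynomial of $C_n(1,2,\ldots,d)$; the claimed $f$-vector is then immediate from the identity $i_k = f_{k-1}$ relating the independence polynomial to the $f$-vector of the independence complex, as the statement itself already suggests.
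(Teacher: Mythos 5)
Your proposal is correct. Note, however, that the paper does not prove this lemma from scratch at all: it obtains the $f$-vector purely by citing Brown and Hoshino's computation of the independence polynomial of $C_n(1,2,\ldots,d)$ \cite[Theorem 3.1]{Brown11} and invoking the dictionary $i_k = f_{k-1}$ between independence polynomials and $f$-vectors of independence complexes --- exactly the ``shortcut'' you mention in your last paragraph. Your main argument is therefore a genuinely different, self-contained route: the classical Kaplansky-style count of $k$-subsets of a cycle of length $n$ with all cyclic gaps at least $d+1$, via the substitution $g_j = (d+1) + h_j$ and the $(\text{set},\text{designated vertex}) \leftrightarrow (\text{position},\text{gap tuple})$ double count that produces $f_{k-1} = \frac{n}{k}\binom{n-kd-1}{k-1} = \frac{n}{n-dk}\binom{n-dk}{k}$. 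The overcounting factor $k$ that you flag as the delicate point is handled correctly by your bijection, and your derivation of $D$ from positivity of the binomial coefficient matches the paper's stated value. What your approach buys is independence from \cite{Brown11} (useful if one wants the paper self-contained); what the paper's approach buys is brevity and consistency with its reliance on \cite{Brown11} elsewhere (e.g., Theorem \ref{Brown4.1}). Either is acceptable as a proof of the lemma.
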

By Lemma \ref{wellcovered}, to characterize the Cohen-Macaulay
circulant graphs of the form $C_n(1,2,\ldots,d)$, it suffices to determine which 
of the well-covered
graphs of Theorem \ref{Brown4.1}
are also Cohen-Macaulay.  Interestingly, proving that
$C_n(1,2,\ldots,d)$ is {\it not} Cohen-Macaulay when $n=4d+3$ is the most subtle part of this
proof.  To carry out this part of the proof, we need the following lemma, whose
proof we postpone until the next section.

\begin{lemma}\label{technicallemma}
Fix an integer $d \geq 3$, and let $G = C_{4d+3}(1,2,\ldots,d)$. If $\Delta = {\rm Ind}(G)$, then
\[\dim_k \tilde{H}_2(\Delta;k) \geq \frac{(4d+3)}{3}\binom{d-1}{2}.\]
\end{lemma}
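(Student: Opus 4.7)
The plan is to reduce the bound to a statement about $\tilde{H}_0$ and $\tilde{H}_1$ via the reduced Euler characteristic. With $n = 4d+3$, Lemma~\ref{independencepolynomial} gives $f_0 = n$, $f_1 = n(d+1)$, and $f_2 = n(d+1)(d+2)/6$, and a short calculation yields
\[
\tilde{\chi}(\Delta) = -1 + f_0 - f_1 + f_2 = \frac{n(d-1)(d-2)}{6} - 1 = \frac{4d+3}{3}\binom{d-1}{2} - 1.
\]
Using the identity $\tilde{\chi}(\Delta) = \dim_k \tilde{H}_0 - \dim_k \tilde{H}_1 + \dim_k \tilde{H}_2$ for nonempty $\Delta$, the lemma reduces to showing $\dim_k \tilde{H}_1(\Delta;k) \geq \dim_k \tilde{H}_0(\Delta;k) + 1$, i.e., that $\Delta$ is connected and $\dim_k \tilde{H}_1(\Delta;k) \geq 1$.

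For connectedness, the $1$-skeleton of $\Delta$ is the complement graph $\bar{G} = C_{n}(d+1, d+2, \ldots, 2d+1)$, and since $4d+3 = 4(d+1) - 1$ we have $\gcd(d+1, 4d+3) = 1$, so iterating the step $i \mapsto i + d + 1$ produces a Hamiltonian cycle in $\bar{G}$; in particular $\tilde{H}_0(\Delta;k) = 0$. Denote this Hamiltonian cycle by $C = \sum_{i=0}^{n-1}[x_i, x_{i+d+1}]$, so $\partial C = \sum_i (x_{i+d+1} - x_i) = 0$ and $C$ is a $1$-cycle in $\Delta$. I will show $[C] \neq 0$ in $\tilde{H}_1(\Delta;\mathbb{Q})$ by constructing a $\mathbb{Q}$-valued $1$-cocycle $\phi$ with $\phi(C) \neq 0$; the universal coefficient theorem then gives $\dim_k \tilde{H}_1(\Delta;k) \geq \dim_\mathbb{Q} \tilde{H}_1(\Delta;\mathbb{Q}) \geq 1$ for every field $k$.

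Define $\phi$ on a forward-oriented edge $[x_i, x_j]$ (meaning $(j - i) \bmod n \in \{d+1, \ldots, 2d+1\}$) by $\phi([x_i, x_j]) = ((j - i) \bmod n) - n/3$, and extend antisymmetrically. A $2$-face of $\Delta$ is an independent triple $\{x_a, x_b, x_c\}$ with $a < b < c$; writing $p = b-a$, $q = c-b$, $r = n-(c-a)$, one has $p + q + r = n$ by construction and independence forces each $p, q, r \geq d+1$ (and therefore each $\leq n - 2(d+1) = 2d+1$). The edges $[x_a, x_b]$ and $[x_b, x_c]$ are forward-oriented with $\phi$-values $p - n/3$ and $q - n/3$, while $c - a = p + q \geq 2d+2$ makes $[x_a, x_c]$ backward-oriented with $\phi([x_a, x_c]) = n/3 - r$. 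Hence
\[
\phi(\partial [x_a, x_b, x_c]) = (q - n/3) - (n/3 - r) + (p - n/3) = (p + q + r) - n = 0,
\]
so $\phi$ is a cocycle. Evaluating on $C$ gives
\[
\phi(C) = n \bigl((d+1) - n/3\bigr) = -\frac{d(4d+3)}{3} \neq 0,
\]
so $[C] \neq 0$ in $\tilde{H}_1(\Delta;\mathbb{Q})$, completing the proof.

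The main obstacle is the construction of the cocycle $\phi$. The crucial structural input is that the three cyclic gaps of any independent triple in $C_{4d+3}(1, \ldots, d)$ sum to exactly $n = 4d+3$; the affine shift $p \mapsto p - n/3$ is precisely calibrated so that $\phi$ vanishes on every triangle boundary while still picking up the nonzero weight $(d+1) - n/3$ on each edge of the Hamiltonian cycle $C$.
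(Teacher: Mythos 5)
Your argument is correct, but it takes a genuinely different route from the paper's. The paper proves the lemma directly by exhibiting $\frac{4d+3}{3}\binom{d-1}{2}$ explicit elements of $\ker \partial_2$: it identifies induced octahedra in $\Delta$ (equivalently, induced subgraphs of $G$ consisting of three pairwise disjoint edges), orders a carefully chosen list of them lexicographically so that each contains a $2$-face not appearing in any earlier one, and concludes linear independence; since $\dim \Delta = 2$, $\tilde{H}_2(\Delta;k) = \ker \partial_2$ and the bound follows over any field at once. You instead observe that the reduced Euler characteristic already equals $\frac{4d+3}{3}\binom{d-1}{2}-1$, so the lemma is equivalent to $\dim_k \tilde{H}_1 - \dim_k \tilde{H}_0 \geq 1$, which you establish by checking connectedness of the $1$-skeleton $C_n(d+1,\ldots,2d+1)$ and by pairing the Hamiltonian $1$-cycle $C=\sum_i [x_i,x_{i+d+1}]$ against your cocycle $\phi$; the cocycle condition rests exactly on the fact that the three cyclic gaps of an independent triple lie in $\{d+1,\ldots,2d+1\}$ and sum to $n$, and your detour through $\mathbb{Q}$ plus universal coefficients correctly handles arbitrary $k$, including characteristic $3$ where $n/3$ is undefined. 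Your route is shorter and, notably, proves directly the fact ($\tilde{H}_1(\Delta;k)\neq 0$) that Theorem \ref{mainthm} actually extracts from the lemma by running the same Euler-characteristic computation in the opposite direction, so your cocycle argument would let one bypass the paper's Section 4 entirely. What the paper's construction buys in exchange is a set of explicit octahedral generators of $\tilde{H}_2$, which is what underlies the conjecture at the end of the paper that the inequality is in fact an equality.
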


Assuming, for the moment, that Lemma \ref{technicallemma} holds, we arrive at our main result:

\begin{theorem}\label{mainthm}
Let $n$ and $d$ be integers with $n\geq 2d\geq 2$ and let $G=C_n(1,2,\ldots,d)$. 
Then the following are equivalent:
\begin{enumerate}
\item[$(i)$] $G$ is Cohen-Macaulay.
\item[$(ii)$] $G$ is shellable.
\item[$(iii)$] $G$ is vertex decomposable.
\item[$(iv)$]  $n \leq 3d+2$ and $n \neq 2d+2$.
\end{enumerate}
\end{theorem}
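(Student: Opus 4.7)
My plan is to prove the equivalence via the cycle $(iii) \Rightarrow (ii) \Rightarrow (i) \Rightarrow (iv) \Rightarrow (iii)$. The first two implications are immediate: vertex decomposability implies shellability, and shellability implies Cohen-Macaulayness, both recorded in the proof of Theorem~\ref{properties}(iv). All of the real content lies in the directions $(i) \Rightarrow (iv)$ and $(iv) \Rightarrow (iii)$.

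For $(iv) \Rightarrow (iii)$ I would split on $n$. When $n \leq 2d+1$, every pair of distinct indices has cyclic distance at most $\lfloor n/2 \rfloor \leq d$, so $G = K_n$ and $\operatorname{Ind}(G)$ is a $0$-dimensional complex of $n$ isolated points, hence vertex decomposable by Theorem~\ref{properties}(v). When $2d+3 \leq n \leq 3d+2$, Lemma~\ref{independencepolynomial} gives $\alpha(G) = \lfloor n/(d+1) \rfloor = 2$, so $\operatorname{Ind}(G)$ is $1$-dimensional with facets exactly the edges of the complement circulant $\overline{G} = C_n(d+1, d+2, \dots, \lfloor n/2 \rfloor)$. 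By Theorem~\ref{properties}(vi), it suffices to verify $\overline{G}$ is connected. For $n = 2d+3$ the connection set is $\{d+1\}$ and $\gcd(d+1, 2d+3) = \gcd(d+1, 1) = 1$; for $2d+4 \leq n \leq 3d+2$, the connection set contains both $d+1$ and $d+2$ with $\gcd(d+1,d+2)=1$, so $\gcd$ with $n$ is again $1$. Thus $\overline{G}$ is connected in either subcase, and $\operatorname{Ind}(G)$ is vertex decomposable.

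For $(i) \Rightarrow (iv)$, assume $G$ is Cohen-Macaulay. By Lemma~\ref{wellcovered} and Theorem~\ref{Brown4.1}, we have $n \leq 3d+2$ or $n = 4d+3$, and it remains to exclude $n = 2d+2$ and $n = 4d+3$. If $n = 2d+2$, then each vertex $x_i$ is non-adjacent only to $x_{i+d+1}$, so the facets of $\operatorname{Ind}(G)$ form a perfect matching on $2d+2$ vertices, which is disconnected for $d \geq 1$; Theorem~\ref{properties}(vi) rules this out. If $n = 4d+3$, Lemma~\ref{independencepolynomial} gives $D = 2$, and a direct simplification of the $f$-vector yields
\[
-1 + f_0 - f_1 + f_2 \;=\; \frac{(4d+3)(d-1)(d-2)}{6} - 1.
\]
Applying Reisner's criterion (Theorem~\ref{reisner}) to the empty face forces $\tilde{H}_0(\Delta;k) = \tilde{H}_1(\Delta;k) = 0$, so by~(\ref{euler}), $\dim_k \tilde{H}_2(\Delta;k)$ equals the displayed value. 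For $d = 1, 2$ this is already the absurd value $-1$; for $d \geq 3$, it contradicts the lower bound $\tfrac{(4d+3)(d-1)(d-2)}{6}$ of Lemma~\ref{technicallemma}.

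The main obstacle is the $n = 4d+3$ case, which succeeds only once Lemma~\ref{technicallemma} is in hand; all other cases collapse to a $0$-dimensional complex or a connected $1$-dimensional complex, where vertex decomposability is automatic. The plan is therefore to defer the top-dimensional homology computation to the next section and take the main theorem as a short consequence of that lemma together with Reisner's criterion and the Brown-Hoshino classification.
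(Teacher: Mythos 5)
Your proposal is correct and follows essentially the same route as the paper: reduce to the Brown--Hoshino well-covered classification, dispatch the low-dimensional cases with Theorem~\ref{properties}(v)--(vi), and eliminate $n=4d+3$ by combining the reduced Euler characteristic of $\operatorname{Ind}(G)$ with Lemma~\ref{technicallemma} and Reisner's criterion. The only cosmetic differences are that you certify connectedness of $\operatorname{Ind}(G)$ for $2d+3\le n\le 3d+2$ via the gcd criterion applied to the complement circulant rather than by exhibiting explicit paths, and you handle $d=1,2$ through the Euler characteristic (obtaining the absurd value $-1$) instead of the negative $h$-vector entry; both steps check out.
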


\begin{proof}
We always have $(iii) \Rightarrow (ii) \Rightarrow (i)$.  We now prove that 
$(iv) \Rightarrow (iii)$.

By Lemma \ref{independencepolynomial}, when $n=2d$ or $n=2d+1$,
$\dim {\rm Ind}(G) = 0$.  Now apply Theorem \ref{properties} $(v)$.

When $2d+2\leq n \leq 3d+2$, $\dim {\rm Ind}(G) = 
 \lfloor \frac{n}{d+1} \rfloor - 1 = 1$.  
Let $V = \{x_1,\ldots,x_n\}$.
If $n=2d+2$, then ${\rm Ind}(G)$
is not connected, because the only edges of ${\rm Ind}(G)$
are $\{x_i,x_{d+1+i}\}$ for $i=1,\ldots,d+1$.  On the other hand,
when $2d+3 \leq n \leq 3d+2$, ${\rm Ind}(G)$ is connected.
To see this, let $n =2d+c$ for $3 \leq c \leq d+2$. 
For each $i=1,2,\ldots,n$, $\{x_i,x_{i+d+2}\}$ and 
$\{x_{i+1},x_{i+d+2}\} \in \Ind(G)$, with subscript addition
adjusted modulo $n$.
Thus, for any $x_i,x_j \in V$ with $i < j$, we can make the path
$x_i,x_{i+d+2},x_{i+1},x_{i+d+3},x_{i+2},\ldots,x_j$.  So, ${\rm Ind}(G)$ is connected.  
Applying Theorem \ref{properties} $(vi)$ then shows that $(iv) \Rightarrow (iii)$.

To complete the proof, 
we will show that if $n \geq 2d$ with $n = 2d+2$ or $n > 3d+2$, then $G$
is not Cohen-Macaulay.  In the proof that $(iv) \Rightarrow (iii)$, we already
showed that if $n=2d+2$, then ${\rm Ind}(G)$ is not connected and $\dim {\rm Ind}(G) =1$.  Again
by Theorem \ref{properties} $(vi)$ this implies $G$ is not Cohen-Macaulay. 

If $n > 3d+2$ and $n \neq 4d+3$, then by Theorem \ref{Brown4.1}, $G$ 
is not well-covered, and consequently, by Lemma \ref{wellcovered}, $G$ is not
Cohen-Macaulay.   It therefore remains to show that if $n=4d+3$, 
then $G$ is not Cohen-Macaulay for all $d \geq 1$.  The remainder of this proof
is dedicated to this case.

By Lemma \ref{independencepolynomial}, $\dim {\rm Ind}(G) = 2$ and the
 $f$-vector of ${\rm Ind}(G)$ is given by
\[f({\rm Ind}(G)) = \left(1,4d+3,4d^2+7d+3,\frac{4d^3+15d^2+17d+6}{6}\right).\]
When $d=1$,  then $f(\Ind(G)) = (1,7,14,7)$ and hence $h(\Ind(G))= (1,4,3,-1)$.
When $d=2$, then $f(\Ind(G)) = (1,11,33,21)$ and hence $h(\Ind(G))=(1,8,13,-1)$.
In these two cases,
Theorem \ref{properties} $(ii)$ implies $G$ is not Cohen-Macaulay.

We can therefore assume that $d \geq 3$.  To show that $\Ind(G)$ is not Cohen-Macaulay,
we will show that $\tilde{H}_1(\Ind(G);k) \neq 0$.  This suffices because ${\rm Ind}(G) = {\rm link}_{{\rm Ind}(G)}(\emptyset)$,
so Reisner's Criterion (Theorem \ref{reisner}) would imply that ${\rm Ind}(G)$ is not Cohen-Macaulay.

Using the fact that $\dim \Ind(G) =2$, the $f$-vector given above,
and the reduced Euler characteristic \eqref{euler} 
we know
\[-1+(4d+3)-(4d^2+7d+3)+\frac{4d^3+15d^2+17d+6}{6} = 
\sum_{i=-1}^2 (-1)^i \dim_k \tilde{H}_{i}(\Ind(G);k).\]
Because $\Ind(G)$ is a non-empty connected simplicial complex, we have
$\dim_k \tilde{H}_i(\Ind(G);k) = 0$, for $i = -1$, and $0$.   Simplifying both sides 
of the above equation and rearranging gives:
\[\dim_k\tilde{H}_1(\Ind(G);k) =\dim_k \tilde{H}_2(\Ind(G);k) -   \frac{d(4d^2-9d-1)}{6}.\]
By Lemma \ref{technicallemma}
\[\dim_k \tilde{H}_1(\Ind(G);k) \geq \frac{4d+3}{3}\binom{d-1}{2} - \frac{d(4d^2-9d-1)}{6} = 1.\]
So, $\tilde{H}_1(\Ind(G);k) \neq 0$ as desired.
\end{proof}

When we specialize the above theorem to the case $d=1$, we recover the known classification
of the Cohen-Macaulay cycles~\cite[Corollary 6.3.6]{Vbook}.  Note that $C_2(1)=K_2$ is also Cohen-Macaulay, but
it is not a cycle.

\begin{corollary} Let $n \geq 3$.  Then 
 $C_n = C_n(1)$ is Cohen-Macaulay if and only if $n=3$ or $5$. 
\end{corollary}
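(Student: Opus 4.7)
The plan is simply to specialize Theorem \ref{mainthm} to the case $d = 1$. With $d = 1$, the hypothesis $n \geq 2d \geq 2$ becomes $n \geq 2$, which is compatible with the corollary's assumption $n \geq 3$. The numerical conditions in part $(iv)$ of Theorem \ref{mainthm} translate to $n \leq 3d+2 = 5$ and $n \neq 2d+2 = 4$.

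Combining these with $n \geq 3$, I would conclude that $C_n(1)$ is Cohen-Macaulay precisely when $n \in \{3, 5\}$, which is exactly the statement of the corollary. There is no real obstacle here: the work was already done in Theorem \ref{mainthm}. To make the exposition self-contained for the reader, I would briefly re-interpret what the bounds mean in this simplest case. For instance, the exclusion $n = 4$ corresponds to $\mathrm{Ind}(C_4)$ being the disconnected $1$-complex with facets $\{x_1,x_3\}$ and $\{x_2,x_4\}$ (Theorem \ref{properties}(vi)), and $n \geq 6$ is excluded because Theorem \ref{Brown4.1} (with $d=1$) shows that $C_n$ fails to be well-covered for $n \geq 6$, so Lemma \ref{wellcovered} rules out Cohen-Macaulayness.

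The only additional remark worth making, which the authors already highlight, is that the statement requires $n \geq 3$ so as not to conflict with the observation that $C_2(1) = K_2$ is Cohen-Macaulay (as a $0$-dimensional complex it is vertex decomposable by Theorem \ref{properties}(v)), but $K_2$ is not a cycle in the conventional sense.
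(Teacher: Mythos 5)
Your proof is correct and takes exactly the paper's approach: the corollary is obtained by specializing Theorem \ref{mainthm} to $d=1$, giving $n\le 5$ and $n\ne 4$, hence $n\in\{3,5\}$ for $n\ge 3$. One caution about your supplementary remark: Theorem \ref{Brown4.1} with $d=1$ does \emph{not} show that $C_n$ fails to be well-covered for all $n\ge 6$ --- the $7$-cycle ($n=4d+3=7$) \emph{is} well-covered, and Theorem \ref{mainthm} rules it out not via Lemma \ref{wellcovered} but via the negative $h$-vector entry (equivalently, the nonvanishing of $\tilde{H}_1$).
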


Even though $C_{2d+2}(1,2,\ldots,d)$ and $C_{4d+3}(1,2,\ldots,d)$ are 
not Cohen-Macaulay, they still have an interesting
algebraic structure, as noted in Theorem \ref{buchsbaum} below.

\begin{definition}
A pure simplicial complex $\Delta$ is called {\it Buchsbaum} over a field $k$ if 
for every non-empty face $F \in \Delta$,  
$\tilde{H}_i({\rm link}_{\Delta}(F);k) = 0$
for all $i < \dim {\rm link}_{\Delta}(F)$.  We 
say a graph $G$ is {\it Buchsbaum} if the independence complex of $G$ is Buchsbaum.
\end{definition}

Note that by Reisner's Criterion (Theorem \ref{reisner}), if $G$ is Cohen-Macaulay, then
$G$ is Buchsbaum.  We can now classify all circulant graphs of the form 
$C_n(1,\ldots,d)$ which are Buchsbaum, but not Cohen-Macaulay.

\begin{theorem}\label{buchsbaum}
Let $n$ and $d$ be integers with $n\geq 2d$ and $d \geq 1$. Let $G=C_n(1,2,\ldots,d)$.
Then $G$ is Buchsbaum, but not Cohen-Macaulay if and only if $n=2d+2$ or $n=4d+3$.
\end{theorem}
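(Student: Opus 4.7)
The plan is to leverage the well-known characterization (see, e.g., \cite{HHbook}) that a pure simplicial complex $\Delta$ is Buchsbaum if and only if ${\rm link}_\Delta(v)$ is Cohen-Macaulay for every vertex $v$. This reduces Buchsbaumness to a local check, since for any non-empty face $F$ and any $v \in F$ one has ${\rm link}_\Delta(F) = {\rm link}_{{\rm link}_\Delta(v)}(F \setminus \{v\})$, so the Cohen-Macaulay property of a vertex link propagates to all such iterated links.

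For the $(\Rightarrow)$ direction, Buchsbaumness forces $\Delta := \Ind(G)$ to be pure, so $G$ is well-covered. Theorem \ref{Brown4.1} restricts $n$ to the set $\{n : n \leq 3d+2\} \cup \{4d+3\}$, and combined with the failure of Cohen-Macaulayness, Theorem \ref{mainthm} eliminates every remaining case except $n = 2d+2$ and $n = 4d+3$.

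For the $(\Leftarrow)$ direction, assume $n = 2d+2$ or $n = 4d+3$. By Theorem \ref{Brown4.1} the graph $G$ is well-covered, so $\Delta$ is pure; by Theorem \ref{mainthm}, $G$ fails to be Cohen-Macaulay. By the rotational symmetry of circulant graphs, it suffices to check that ${\rm link}_\Delta(x_1) = \Ind(H)$ is Cohen-Macaulay, where $H$ is the subgraph of $G$ induced on the $n - (2d+1)$ vertices outside the closed neighborhood of $x_1$. If $n = 2d+2$, then $H$ has a single vertex and $\Ind(H)$ is a $0$-simplex, which is Cohen-Macaulay by Theorem \ref{properties}$(v)$.

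The heart of the proof is the case $n = 4d+3$, where $H$ has $2d+2$ vertices, namely $x_{d+2}, x_{d+3}, \ldots, x_{3d+3}$. Since the cyclic wraparound distance between any two of these indices exceeds $d$, the induced edges are exactly $\{x_j, x_k\}$ with $1 \leq |j-k| \leq d$, so $H$ is the $d$-th power of a path on $2d+2$ vertices. I would then argue that $\Ind(H)$ is one-dimensional (three pairwise independent vertices would require a total index gap of at least $2(d+1) > 2d+1$) and pure (no vertex of $H$ is adjacent to all others). For connectivity, $x_{d+2}$ is independent from every vertex in $\{x_{2d+3}, \ldots, x_{3d+3}\}$ while $x_{3d+3}$ is independent from every vertex in $\{x_{d+2}, \ldots, x_{2d+2}\}$; the union of these two lists is all of $V(H)$, and $x_{d+2}$ and $x_{3d+3}$ are themselves independent, so every vertex of $H$ lies on a short path in $\Ind(H)$ through one of these two ``endpoint'' vertices. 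Then Theorem \ref{properties}$(vi)$ yields Cohen-Macaulayness of the link. The main obstacle I anticipate is exactly this identification of $H$ and the connectivity analysis of $\Ind(H)$; everything else reduces to direct applications of the earlier results.
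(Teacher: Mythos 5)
Your proposal is correct and follows essentially the same route as the paper: both directions hinge on Theorems \ref{Brown4.1} and \ref{mainthm} for the forward implication, and on showing that the link of a vertex (identified with ${\rm Ind}(H)$ for $H$ the induced subgraph on the non-neighbors) is a connected one-dimensional complex, using exactly the edges through the two ``hub'' vertices $x_{d+2}$ and $x_{3d+3}$ that the paper exhibits. Your only packaging difference is reducing Buchsbaumness to Cohen--Macaulayness of vertex links up front rather than checking the homology condition for faces of sizes $1$, $2$, $3$ separately; this is a valid and slightly cleaner organization of the same argument.
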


\begin{proof}
$(\Rightarrow)$  For $G$ to be Buchsbaum, ${\rm Ind(G)}$ must be pure, that is
$G$ is well-covered.  By Theorem \ref{Brown4.1}, $2d \leq n \leq 3d+2$ 
or $n=4d+3$.  Because $G$ is not Cohen-Macaulay, Theorem \ref{mainthm}
implies $n=2d+2$ or $n=4d+3$.  

$(\Leftarrow)$  We first show that if $n=4d+3$, then $G$ is Buchsbaum.
Let $\Delta = {\rm Ind}(G)$.  Since $\dim \Delta = 2$, 
given any $F \in \Delta$,  $|F| \in \{0,1,2,3\}$.   We wish to show that if $|F|=1,2$, or $3$, 
then $\tilde{H}_i({\rm link_{\Delta}}(F);k) = 0$
for all $i < \dim {\rm link}_{\Delta}(F)$.

If $|F| = 3$, then ${\rm link}_\Delta(F) = \{\emptyset\}$, and hence  
$\tilde{H}_i({\rm link_{\Delta}}(F);k) = 0$ for all 
$i < \dim {\rm link}_{\Delta}(F) = -1$.  When 
$|F| =2$, then $\dim {\rm link_\Delta(F)} = 0$,  and again, we have  
$\tilde{H}_i({\rm link_{\Delta}}(F);k) = 0$
for all $i < \dim {\rm link}_{\Delta}(F) = 0$.

It therefore suffices to show that when $|F|=1$,  
then $\tilde{H}_i({\rm link_{\Delta}}(F);k) = 0$
for all $i < \dim {\rm link}_{\Delta}(F)$.  Because of the symmetry of $G$,
we can assume without a loss of generality that 
$F = \{x_1\}$.  Because $G$ is well-covered, any independent set containing $x_1$ can be
extended to a maximal independent set, and furthermore, this independent set has cardinality three.
This, in turn, implies that $\dim {\rm link}_{\Delta}(F) = 1$.  For any $i <0$,  
$\tilde{H}_i({\rm link_{\Delta}}(F);k) = 0$, so it suffices to prove that 
$\tilde{H}_0({\rm link_{\Delta}}(F);k) = 0$.  
Proving this condition is equivalent to proving
that ${\rm link}_\Delta(F)$ is connected.

We first note that none of the vertices 
$x_2,x_3,\ldots,x_{d+1},x_{3d+4},x_{3d+5},\ldots,x_{4d+3}$
appear in ${\rm link}_{\Delta}(\{x_1\})$ because these vertices are all adjacent to $x_1$
in $G$.  On the other hand, the following elements are facets of $\Delta$:
\[\begin{array}{l}
\{x_1,x_{d+2},x_{2d+3}\},\{x_1,x_{d+2},x_{2d+4}\},\ldots,\{x_1,x_{d+2},x_{3d+3}\},\\
\{x_1,x_{d+3},x_{3d+3}\},\{x_1,x_{d+4},x_{3d+3}\},\ldots,\{x_1,x_{2d+2},x_{3d+3}\}.
\end{array}\]
Consequently the following edges are in ${\rm link}_{\Delta}(\{x_1\})$:
\small
\[\begin{array}{l}
\{x_{d+2},x_{2d+3}\},\{x_{d+2},x_{2d+4}\},\ldots,\{x_{d+2},x_{3d+3}\},\{x_{d+3},x_{3d+3}\},\{x_{d+4},x_{3d+3}\},\ldots,\{x_{2d+2},x_{3d+3}\}.
\end{array}\]
\normalsize
Thus ${\rm link}_{\Delta}(\{x_1\})$ is connected, as desired.

Now suppose $n=2d+2$. As shown in  the proof of
Theorem \ref{mainthm}, ${\rm Ind}(G)$ consists
of the disjoint edges $\{x_i,x_{d+1+i}\}$ for $i=1,\dots,d+1$.  If $F \in {\rm Ind}(G)$
and $|F|=2$, then ${\rm link}_{\Delta}(F) = \{\emptyset\}$.  If $F \in {\rm Ind}(G)$
and $|F|=1$, then ${\rm link}_{\Delta}(F) = \{\{x\}\}$ for some variable $x$.
Therefore $G$ is Buchsbaum.  
\end{proof}


\section{Proof of Lemma \ref{technicallemma}}

The purpose of this section is to prove Lemma \ref{technicallemma}.
We will be interested in finding induced octahedrons in our independence
complex.  

\begin{lemma}\label{disjointedges}
Fix an integer $d \geq 3$.
Let $G = C_{4d+3}(1,2,\ldots,d)$ and let $\Delta = {\rm Ind}(G)$  
be the associated independence complex.  Let 
$W = \{i_1,i_2,j_1,j_2,k_1,k_2\} \subseteq V_G$ be six distinct vertices.
Then the induced simplicial complex $\Delta|_W = 
\{F \in \Delta ~|~ F \subseteq W\}$ is isomorphic to the 
labeled octahedron in Figure~\ref{octo}
if and only if the induced graph $G_W$ is 
the graph of three disjoint edges $\{i_1,i_2\}$,
$\{j_1,j_2\}$, and $\{k_1,k_2\}$.
\begin{figure}[h]
\[
\begin{tikzpicture}[line join=bevel,z=-5.5]
\coordinate (A1) at (0,0,-1.25);
\coordinate (A2) at (-1.25,0,0);
\coordinate (A3) at (0,0,1.25);
\coordinate (A4) at (1.25,0,0);
\coordinate (B1) at (0,1.25,0);
\coordinate (C1) at (0,-1.25,0);

\draw [fill opacity=0.1, fill=white!80!black] (A1) -- (A2) -- (B1) -- cycle;
\draw [fill opacity=0.3, fill=white!80!black] (A4) -- (A1) -- (B1) -- cycle;
\draw [fill opacity=0.6, fill=white!80!black] (A1) -- (A2) -- (C1) -- cycle;
\draw [fill opacity=0.8, fill=white!80!black] (A4) -- (A1) -- (C1) -- cycle;
\draw [fill opacity=1.0] (A2) -- (A3) -- (B1) -- cycle;
\draw [fill opacity=1.0] (A3) -- (A4) -- (B1) -- cycle;
\draw [fill opacity=1.0] (A2) -- (A3) -- (C1) -- cycle;
\draw [fill opacity=1.0] (A3) -- (A4) -- (C1) -- cycle;

\node [right] at (A1) {$j_2$};
\node [left] at (A2) {$k_1$};
\node [left] at (A3) {$j_1$};
\node [right] at (A4) {$k_2$};
\node [above] at (B1) {$i_1$};
\node [below] at (C1) {$i_2$};
\end{tikzpicture}
\]
\caption{A labeled octahedron}\label{octo} 
\end{figure}
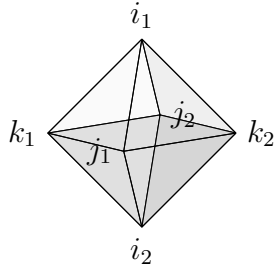
\end{lemma}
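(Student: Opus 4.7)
The plan is to reduce the lemma to a routine comparison of $1$-skeleta, by exploiting the fact that both complexes involved are flag. First I would record the standard observation that the independence complex $\Delta = {\rm Ind}(G)$ is a flag simplicial complex: a subset $F \subseteq V_G$ is a face of $\Delta$ iff no pair of vertices of $F$ spans an edge of $G$, iff every pair $\{a,b\} \subseteq F$ is already a face of $\Delta$. This flag property is preserved by restriction, so $\Delta|_W$ is flag as well, and is completely determined by its $1$-skeleton.

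Next, I would observe that the labeled octahedron $O$ in Figure~\ref{octo} is itself a flag complex on $W$: its $12$ edges are precisely the $15 - 3 = 12$ pairs from $W$ other than the three antipodal pairs $\{i_1,i_2\}$, $\{j_1,j_2\}$, $\{k_1,k_2\}$, while its $8$ triangular facets are exactly the triples $\{a,b,c\} \subseteq W$ containing no antipodal pair (equivalently, one vertex from each antipodal pair). In particular $O$ is also determined by its $1$-skeleton, or equivalently by its set of non-edges $\{\{i_1,i_2\}, \{j_1,j_2\}, \{k_1,k_2\}\}$.

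Combining these two observations, $\Delta|_W \cong O$ as labeled simplicial complexes on $W$ iff their $1$-skeleta coincide, iff the set of non-edges of $\Delta|_W$ inside $W$ equals $\{\{i_1,i_2\}, \{j_1,j_2\}, \{k_1,k_2\}\}$. Since a pair $\{a,b\} \subseteq W$ is a non-edge of $\Delta|_W$ precisely when $\{a,b\} \in E_G$, i.e., when $\{a,b\}$ is an edge of the induced subgraph $G_W$, this is in turn equivalent to the statement that $E(G_W) = \{\{i_1,i_2\}, \{j_1,j_2\}, \{k_1,k_2\}\}$, i.e., that $G_W$ is the graph of three disjoint edges with exactly the specified labeling. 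I do not anticipate a real obstacle: the lemma is essentially a bookkeeping statement, and the only point requiring a bit of care is lining up the antipodal structure of the octahedron with the labels in Figure~\ref{octo}, which can be read off the figure directly.
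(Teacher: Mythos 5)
Your proposal is correct and follows essentially the same route as the paper: both reduce the statement to checking that the non-edges of $\Delta|_W$ are exactly the three pairs $\{i_1,i_2\}$, $\{j_1,j_2\}$, $\{k_1,k_2\}$. Your explicit appeal to the flag property of independence complexes (and of the octahedron, which is ${\rm Ind}$ of a perfect matching on six vertices) cleanly justifies the ``determined by its $1$-skeleton'' step that the paper carries out by direct verification of the eight facets, so the write-up is if anything slightly tidier.
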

\begin{proof}
Suppose that $\Delta|_W$ is isomorphic to the octahedron in Figure~\ref{octo}.
It follows that $\{i_1,i_2\}$, $\{j_1,j_2\}$ and $\{k_1,k_2\}$, which
are not edges of the octahedron, are also not edges of $\Delta$.  Because
$\Delta$ is an independence complex, these means that each set is not
an independent set, or in other words, $e_1 = \{i_1,i_2\}$, $e_2 = \{j_1,j_2\}$, and 
$e_3 = \{k_1,k_2\}$
are all edges of $G$.  It suffices to show that $G_W$ consists only of these edges.
If not, there is a vertex $x \in e_i$ and a vertex $y \in e_j$ with $i \neq j$,
such that $\{x,y\}$ is an edge of $G$.  However, for any $x \in e_i$
and $y \in e_j$, $\{x,y\}$ is an edge of $\Delta|_W$,
and consequently, $\{x,y\}$ cannot be an edge of $G$, a contradiction.

For the converse, we reverse the argument.  If $G_W$ is
the three disjoint edges $\{i_1,i_2\}$, $\{j_1,j_2\}$ and $\{k_1,k_2\}$, then
it follows that $\{i_1,j_1,k_1\},$ $\{i_1,j_1,k_2\},$
$\{i_1,j_2,k_1\},$
$\{i_1,j_2,k_2\},$
$\{i_2,j_1,k_1\},$ 
$\{i_2,j_1,k_2\},$
$\{i_2,j_2,k_1\},$
$\{i_2,j_2,k_2\}$ are all independent sets of $G$, and thus belong to
$\Delta$, and consequently, $\Delta|_W$.    Because $\{i_1,i_2\}$,
$\{j_1,j_2\}$, and $\{k_1,k_2\}$ are not faces of $\Delta$,  the facets
of the complex $\Delta|_W$ are these eight faces, whence $\Delta|_W$ is
an octahedron.
\end{proof}

We now come to our desired proof.

\begin{proof}(of Lemma \ref{technicallemma})
We begin by first recalling some facts about $\Delta = {\rm Ind}(G)$.  
By Theorem \ref{Brown4.1} and Lemma \ref{independencepolynomial}, the simplicial complex
$\Delta$ is pure and two dimensional
with $f(\Delta) = (f_{-1},f_0,f_1,f_2)$.  Therefore, the reduced
chain complex of $\Delta$ over $k$ has the form
\[0 \longleftarrow k^{f_{-1}} \stackrel{\partial_0}{\longleftarrow} 
 k^{f_{0}} \stackrel{\partial_1}{\longleftarrow} 
 k^{f_{1}} \stackrel{\partial_2}{\longleftarrow} 
 k^{f_{2}} \longleftarrow 0.\]
It follows from this chain complex that $\dim_k \tilde{H}_2(\Delta;k)
= \dim_k \ker \partial_2$.

Our strategy, therefore, is to identify  $\frac{(4d+3)}{3}\binom{d-1}{2}$ linearly
independent elements in $\ker \partial_2$.  Note that if $W \subseteq V$
is a subset of the vertices such that the induced complex
$\Delta|_W$ is isomorphic to an 
octahedron, then this octahedron corresponds to an element of $\ker \partial_2$.
We make this more precise.  Suppose that 
$W = \{i_1,i_2,j_1,j_2,k_1,k_2\} \subseteq
V$ and $\Delta|_W$ is an octahedron, i.e, the simplicial complex with facets
\begin{eqnarray*}
\Delta|_W &=& \langle\{i_1,j_1,k_1\},
\{i_1,j_1,k_2\},
\{i_1,j_2,k_1\},
\{i_1,j_2,k_2\}, \\
&& \{i_2,j_1,k_1\}, 
\{i_2,j_1,k_2\},
\{i_2,j_2,k_1\},
\{i_2,j_2,k_2\}\rangle.
\end{eqnarray*}
Note that each $\{i_a,j_b,k_c\}$ is a $2$-dimensional face of $\Delta$; we 
associate to $\Delta|_W$ the following element of $k^{f_2}$:
\[O_W = e_{i_1j_1k_1} - 
 e_{i_1j_1k_2}
- e_{i_1j_2k_1}
- e_{i_2j_1k_1}
+ e_{i_1j_2k_2}
+ e_{i_2j_1k_2}
+ e_{i_2j_2k_1}
- e_{i_2j_2k_2}.\]
Here, we have assumed that the indices of each basis element have been written
in increasing order.  The boundary map $\partial_2$ evaluated at $O_W$ gives
$\partial_2(O_W) = 0$, i.e., $O_W \in \ker \partial_2$.

To compute the lower bound on $\dim_k \tilde{H}_2(\Delta;k)$, we will build a list $L$
of octahedrons in $\Delta$ and then order the elements of $L$ using the lexicographical ordering so
that each octahedron in the list $L$
contains a face that has not appeared in any previous octahedron in 
$L$ with respect to the ordering.  
By associating each octahedron to the corresponding element of $k^{f_2}$,
each octahedron will belong to $\ker \partial_2$.  Moreover, 
the fact that each octahedron in $L$ has a face that has not
appeared previously implies that the octahedron can not be written as
a linear combination of our previous elements in $\ker \partial_2$, thus
giving us the required number of linearly independent elements.   

By Lemma \ref{disjointedges}, there is a one-to-one correspondence between
the induced octahedrons of $\Delta$ and the induced subgraphs of $G$ consisting
of three pairwise disjoint edges.  So we can represent an octahedron by
a tuple $(i_1,i_2;j_1,j_2;k_1,k_2)$ where $\{i_1,i_2\}$, $\{j_1,j_2\}$, and $\{k_1,k_2\}$
correspond to these edges.   

We begin by considering the octahedrons 
described by the following list:
\begin{equation}\label{list1}
\begin{array}{ccc}
(1,2;&d+3,d+4;&2d+5,3d+3) \\
(1,2;&d+3,d+4;&2d+6,3d+3) \\
 &  & \vdots \phantom{, 3d+3} \\
(1,2;&d+3,d+4;&3d+2,3d+3) \\
\hline
(1,2;&d+3,d+5;&2d+6,3d+3) \\
(1,2;&d+3,d+5;&2d+7,3d+3) \\
&  & \vdots \phantom{, 3d+3}\\
(1,2;&d+3,d+5;&3d+2,3d+3) \\
\hline
(1,2;&d+3,d+6;&2d+7,3d+3) \\
& &\vdots 
\phantom{,3d+3} \\
\hline
(1,2;&d+3,2d;\phantom{+2}&3d+1,3d+3) \\
(1,2;&d+3,2d;\phantom{+2}&3d+2,3d+3) \\
\hline
(1,2;&d+3,2d+1;&3d+2,3d+3) 
\end{array}
\end{equation}

If we take our list of 
octahedrons in (\ref{list1}) and add one to each index,
we will get a new list of octahedrons.  In terms of the graph $G = C_{4d+3}(1,2,\ldots,d)$, we are ``rotating''
our disjoint edges to the right.  We ``rotate'' these disjoint edges, or equivalently, we add one
to each index,  until $k_1 = 4d+3$.
So, for example, 
the disjoint edges $(1,2;d+3,d+4;2d+5,3d+3)$ can be rotated to the right $2d-2$
times to give us $2d-1$ octahedrons
\[\begin{array}{ccc}
(1,2;&d+3,d+4;&2d+5,3d+3) \\
(2,3;&d+4,d+5;&2d+6,3d+4) \\
(3,4;&d+5,d+6;&2d+7,3d+5) \\
\vdots&\vdots&\vdots \\
(2d-1,2d;\phantom{+2}&3d+1,3d+2;&4d+3,d-2). 
\end{array}\]
On the other hand, the disjoint edges $(1,2;d+3,2d+1;3d+2,3d+3)$ are only rotated $d+1$ times to 
create $d+2$ octahedrons 
\[\begin{array}{ccc}
(1,2;&d+3,2d+1;&3d+2,3d+3) \\
(2,3;&d+4,2d+2;&3d+3,3d+4) \\
(3,4;&d+5,2d+3;&3d+3,3d+5) \\
\vdots&\vdots&\vdots \\
(d+2,d+3;&2d+4,3d+2;&4d+3,1)\phantom{+5)}. 
\end{array}\]
If we carry out this procedure, we end up with an expanded list $L$ of octahedrons with
\[|L|= \sum_{k=1}^{d-2}k(2d-k) =  2d\sum_{k=1}^{d-2} k - \sum_{k=1}^{d-2} k^2 = \frac{4d+3}{3} \binom{d-1}{2}.  \]
To see why, there is only one collection of disjoint edges with $k_1 = 2d+5$ 
which is rotated $2d-1$ times, there are two tuples of disjoint edges with  $k_1 = 2d+6$
which are rotated $2d-2$ times, and so on, until we arrive at the $d-2$ tuples
which are constructed from all the tuples with $k_1=3d+2$  rotated $d+2$ times.

It now suffices to show that the corresponding
elements of $\ker \partial_2$ are linearly independent. 
In \eqref{list}, we have arranged the list $L$ in lexicographical
order from smallest to largest:
\begin{equation}\label{list}
\begin{array}{ccc}
(1,2;&\phantom{3}d+3,\phantom{3}d+4;&2d+5,3d+3) \\
&\phantom{3d+2;}\vdots&\vdots\phantom{3d+2;} \\
(1,2;&\phantom{3}d+3,2d+1;&3d+2,3d+3) \\
\hline
(2,3;&\phantom{3}d+4,d+5;&2d+6,3d+4) \\
&\phantom{3d+2;}\vdots&\vdots\phantom{3d+2;} \\
(2,3;&\phantom{3}d+4,2d+2;&3d+3,3d+4) \\
\hline
&\vdots& \\
\hline
(2d-2,2d-1;&\phantom{+1}3d,3d+1;&4d+3,d-3) \\
(2d-2,2d-1;&\phantom{+1}3d,3d+2;&4d+3,d-3) \\
\hline
(2d-1,2d;\phantom{-1}&3d+1,3d+2;&4d+3,d-2) \\
\end{array}
\end{equation}
For each $(i_1,i_2;j_1,j_2;k_1,k_2)$ in $L$, 
we consider the two-dimensional face 
$\{i_2,j_2,k_1\}$ 
of the associated octahedron.  
We claim that as we progress down
the list in \eqref{list}, each face $\{i_2,j_2,k_1\}$
has not appeared in a previous octahedron.

In particular, suppose that $(i_1,i_2;j_1,j_2;k_1,k_2)$ is the $\ell$-th item
in \eqref{list}.   We wish to show that the face $\{i_2,j_2,k_1\}$ has not
appeared in any of the first $\ell-1$ octahedrons in the lexicographically ordered list \eqref{list}.
Suppose, that $(a_1,a_2;b_1,b_2,c_1,c_2)$ appears earlier in the list
and contains the face $\{i_2,j_2,k_1\}$.  
For this face to appear,
$\{a_1,a_2\}$ must contain exactly one of $i_2,j_2,k_1$,
$\{b_1,b_2\}$ must contain exactly one of the remaining two vertices, 
and $\{c_1,c_2\}$, must contain
the remaining vertex of the face.

By the way we listed and constructed our octahedrons,
$i_2 < j_2 < k_1$, $i_1=i_2-1$, and
$j_1 = i_2+d+1$. Further, $k_2 = i_2 + 3d+1$ if $i_2+3d+1 \leq (4d-3)$, 
and $k_2 = i_2+3d+1  -(4d-3)$ if
$i_2+3d+1 > (4d-3)$.
Note that $a_1 \neq i_2,j_2$ or $k_2$ otherwise $a_1 > i_1$, contradicting
the lexicographical ordering.  Also, if $a_2 = j_2$,
then $a_1=j_2-1 \geq i_2 > i_1$, again a contradiction.  The same problem arises
if $a_2 =k_1$.  Thus $\{a_1,a_2\} = \{i_2-1,i_2\} = \{i_1,i_2\}$,
i.e., $(a_1,a_2;b_1,b_2;c_1,c_2) = (i_1,i_2,j_1,b_2,c_1,k_2)$.  

Since $j_2$ and $k_1$ must also appear 
in this tuple, there are only two possibilities:
\[(a_1,a_2;b_1,b_2;c_1,c_2) = (i_1,i_2,j_1,k_1,j_2,k_2) ~~\mbox{or} ~~(i_1,i_2,j_1,j_2,k_1,k_2).\]
But neither of these tuples appear strictly before $(i_1,i_2,j_1,j_2,k_1,k_2)$ 
with respect to our ordering, thus completing the proof.
\end{proof}


\section{Cohen-Macaulay Circulant Cubic Graphs}\label{CubicCM}

Brown and Hoshino~\cite{Brown11} classified which circulant cubic graphs are well-covered. 
Recall that a {\it cubic} graph is a graph in which each vertex has degree $3$. Thus, if $G$ is a circulant cubic graph,
then $G = C_{2n}(a,n)$ for some $1 \leq a < n$.  

There are only a finite number of connected well-covered
circulant cubic graphs:

\begin{theorem}[{\cite[Theorem 4.3]{Brown11}}]\label{Brownscubic}
Let $G$ be a connected circulant cubic graph. 
Then $G$ is well-covered if and only if it is isomorphic to one of the 
following graphs: $C_4(1,2)$, $C_6(1,3)$, $C_6(2,3)$, 
$C_8(1,4)$ or $C_{10}(2,5)$.
\end{theorem}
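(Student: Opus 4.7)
The plan is to prove the classification in two directions.

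\textbf{Forward direction.} For each of the five listed graphs, I would verify well-coveredness by direct inspection. Each graph has at most $10$ vertices, so by vertex-transitivity of circulant graphs it suffices to enumerate the maximal independent sets containing a fixed vertex $x_1$. Thus $C_4(1,2) = K_4$ has independence number $1$; $C_6(1,3) \cong K_{3,3}$ has its two colour classes as the only maximal independent sets, both of size $3$; $C_6(2,3)$ is the $3$-prism and $C_{10}(2,5)$ is the $5$-prism, both well-covered with all maximal independent sets of size $n$; and $C_8(1,4)$ admits a short case check showing every maximal independent set has size $3$.

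\textbf{Converse direction.} Let $G = C_{2n}(a,n)$ be a connected cubic circulant not on the above list. Each vertex $i$ has neighbours $\{i-a, i+a, i+n\}$ modulo $2n$, so $G$ decomposes as the union of its $a$-cycles together with the antipodal perfect matching given by the $n$-edges. Using the circulant automorphism $x_i \mapsto x_{ti}$ for $t$ coprime to $2n$, I reduce to a canonical choice of $a$, and then split into cases based on $\gcd(a,2n)$ and the residue of $n$ modulo $a+1$. The goal in each remaining case is to exhibit two maximal independent sets of different cardinalities, thereby violating well-coveredness.

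A useful ingredient is an \emph{efficient dominating set}: a set $D \subseteq V_G$ whose members, together with their neighbours, partition $V_G$. Because every closed neighbourhood in a cubic graph has exactly four elements, such a $D$ has cardinality $2n/4 = n/2$, and since it is dominating it is automatically a maximal independent set. On the other hand $\alpha(G)$ is typically strictly larger. For instance, in $C_{12}(1,6)$ the set $\{x_1,x_5,x_9\}$ is efficiently dominating, whereas $\{x_1,x_3,x_5,x_8,x_{10}\}$ is independent of size $5$; hence $C_{12}(1,6)$ is not well-covered. Analogous efficient-domination constructions handle the infinite families of larger $(n,a)$, and when no efficient dominator exists I would instead build a smaller maximal independent set by a blocking construction exploiting the antipodal $n$-edges.

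\textbf{Main obstacle.} The casework is driven by $\gcd(a,2n)$, the parity of $n$, and the residue of $n$ modulo $a+1$, so no single uniform construction handles every case. The delicate step is verifying maximality of the smaller constructed set, since one must rule out every vertex outside it as a potential addition. Treating the remaining small instances $2n \in \{12,14,16,18\}$ by direct enumeration, combined with a uniform dominating-set construction for large $n$, finishes the argument.
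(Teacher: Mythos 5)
The paper does not prove this statement at all: Theorem~\ref{Brownscubic} is imported verbatim from Brown and Hoshino \cite[Theorem 4.3]{Brown11}, so there is no internal argument to compare yours against, and your attempt must stand on its own. The forward direction of your plan is fine in principle (five graphs on at most ten vertices, checked by hand using vertex-transitivity), though note a factual slip: the maximal independent sets of the prism over $C_m$ have cardinality $m-1$, not $m$, so for $C_6(2,3)$ they have size $2$ and for $C_{10}(2,5)$ size $4$; as stated, your verification of these two cases is wrong.

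The genuine gap is in the converse. You never actually produce the constructions that kill the infinite families; you exhibit one instance, $C_{12}(1,6)$, and assert that ``analogous'' efficient-domination arguments and an unspecified ``blocking construction'' handle everything else. Two things are missing. First, the reduction to a canonical $a$ should be made precise: by the Davis--Domke decomposition (Theorem~\ref{3circulant} of this paper), a \emph{connected} cubic circulant is isomorphic either to the M\"obius ladder $C_{2n}(1,n)$ or, for $n$ odd, to the prism $C_{2n}(2,n)$; your proposed case split on $\gcd(a,2n)$ and on $n$ modulo $a+1$ does not correspond to anything in this problem and would not organize the argument. Second, for each of the two resulting one-parameter families you need an explicit pair of maximal independent sets of different sizes valid for all large $n$ --- for example, in the prism over $C_m$ with $m\ge 7$ odd, a dominating independent set of size roughly $\lceil m/2\rceil$ (such as $\{(0,0),(3,0),(1,1),(5,1)\}$ when $m=7$) against the maximum independent set of size $m-1$; and similarly for M\"obius ladders $C_{2n}(1,n)$ with $n\ge 5$, where no efficient dominating set exists when $4\nmid 2n$ and your fallback construction is never described. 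Until those general constructions are written down and their maximality verified, the ``only if'' direction is a plan, not a proof.
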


Using a computer algebra system like \emph{Macaulay2} ~\cite{Mt}, one can simply check
which of these graphs, displayed in Figure~\ref{cubic}, are Cohen-Macaulay.  

\begin{theorem}\label{cubicmainthm}
Let $G$ be a connected circulant cubic graph. 
Then is Cohen-Macaulay if and only if it is isomorphic to $C_4(1,2)$ 
or $C_6(2,3)$.
\end{theorem}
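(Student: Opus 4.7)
The plan is to reduce the problem to a finite case analysis. By Lemma~\ref{wellcovered}, every Cohen-Macaulay graph is well-covered, so Theorem~\ref{Brownscubic} immediately restricts the list of candidates to the five graphs $C_4(1,2)$, $C_6(1,3)$, $C_6(2,3)$, $C_8(1,4)$, and $C_{10}(2,5)$. For each of these I would decide Cohen-Macaulayness by elementary means using the tools collected in Section~\ref{sec:background}; the theorem then follows once we confirm that exactly $C_4(1,2)$ and $C_6(2,3)$ pass the test.

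For the three graphs whose independence complexes are either low-dimensional or visibly disconnected, the arguments are short. Since $C_4(1,2) = K_4$ has every pair of vertices adjacent, its independence complex consists only of singletons, so it is $0$-dimensional and Theorem~\ref{properties}$(v)$ gives Cohen-Macaulayness. A direct inspection of the non-edges of $C_6(2,3)$ shows that $\mathrm{Ind}(C_6(2,3))$ is the $1$-dimensional complex whose edges form a single $6$-cycle, in particular connected, so Theorem~\ref{properties}$(vi)$ applies. On the other hand, $C_6(1,3)$ is isomorphic to $K_{3,3}$, and its independence complex has just two facets, namely the two color classes, which are disjoint vertex sets; therefore $\tilde{H}_0(\mathrm{Ind}(C_6(1,3));k) \neq 0$, and Reisner's Criterion (Theorem~\ref{reisner}) fails at $F = \emptyset$, so $C_6(1,3)$ is not Cohen-Macaulay.

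The main obstacle is the remaining pair $C_8(1,4)$ and $C_{10}(2,5)$, whose independence complexes are pure of dimensions $2$ and $3$ respectively. For each I would exploit the rotational symmetry of the circulant to count, for a fixed vertex, the number of independent sets of each cardinality containing it, assemble the $f$-vector of $\mathrm{Ind}(G)$, and then compute the $h$-vector using the formula recorded in Section~\ref{sec:background}. I expect in each case to find a strictly negative entry (the calculation should yield $h(\mathrm{Ind}(C_8(1,4))) = (1,5,3,-1)$ and $h(\mathrm{Ind}(C_{10}(2,5))) = (1,6,6,-4)$), at which point Theorem~\ref{properties}$(ii)$ rules out Cohen-Macaulayness. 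Alternatively, since all five candidate graphs have at most ten vertices, a very short \emph{Macaulay2} computation comparing depth to Krull dimension of $R/I(G)$ verifies the same conclusion, which is the route the authors suggest.
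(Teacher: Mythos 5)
Your reduction to the five well-covered connected cubic circulants via Lemma~\ref{wellcovered} and Theorem~\ref{Brownscubic} is exactly the paper's first step, and your verdicts on all five graphs agree with the paper's. Where you genuinely diverge is in how each candidate is tested: the paper runs one uniform test, comparing $\alpha(G)$ with $n-\operatorname{pdim}(R/I(G))$ (the projective dimension computed in \emph{Macaulay2}) and invoking Auslander--Buchsbaum, whereas you argue by hand --- Theorem~\ref{properties}$(v)$ for $C_4(1,2)=K_4$, Theorem~\ref{properties}$(vi)$ for the connected $1$-dimensional complex $\mathrm{Ind}(C_6(2,3))$ (a $6$-cycle), disconnectedness of $\mathrm{Ind}(K_{3,3})$ together with Reisner's Criterion for $C_6(1,3)$, and negative $h$-vector entries for the two remaining graphs. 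I verified your counts: $f(\mathrm{Ind}(C_8(1,4)))=(1,8,16,8)$ does give $h=(1,5,3,-1)$, and $f(\mathrm{Ind}(C_{10}(2,5)))=(1,10,30,30,10)$ gives $h=(1,6,6,-4,1)$; your $h$-vector for the latter omits the final entry $h_4=1$, but the negative entry $h_3=-4$ is correct and is all that Theorem~\ref{properties}$(ii)$ requires. Your route has the advantage of being computer-free and independent of the field $k$, at the cost of some case-by-case counting; the paper's is shorter to state but leans on software. Both are valid proofs of the same statement.
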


\begin{proof}
By Theorem \ref{Brownscubic}, it suffices to check which of the
graphs $C_4(1,2)$, $C_6(1,3)$, $C_6(2,3)$, $C_8(1,4)$ 
or $C_{10}(2,5)$,
\begin{figure}[h]
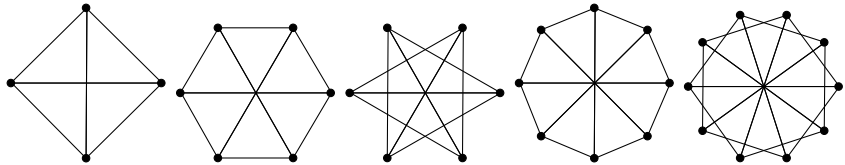

\[\Circulant{4}{1,2}~~\Circulant{6}{1,3}~~\Circulant{6}{2,3}~~\Circulant{8}{1,4}
~~\Circulant{10}{2,5}\] 
\caption{The well-covered connected cubic circulant graphs.}\label{cubic}
\end{figure}
are also Cohen-Macaulay.

For any graph $G$,  $\dim R/I(G) = \alpha(G)$.  So, by
Theorem \ref{properties} $(iii)$, we simply need to check if 
$\alpha(G) = n - \operatorname{pdim}(R/I(G))$.
We can compute $\alpha(G)$ for each of the graphs $G$ in Figure~\ref{cubic} by inspection; on the
other hand, we compute the projective dimension using a computer algebra system.
The following table summarizes these calculations:

\begin{center}
\begin{tabular}{|c|ccccc|}
\hline
\hline
$G$   & $C_4(1,2)$ & $C_6(1,3)$ & $C_6(2,3)$ & $C_8(1,4)$ & $C_{10}(2,5)$ \\
\hline
\hline
$n - \operatorname{pdim}(R/I(G))$ & $1$ & $1$ &$2$ &$2$ &$2$ \\
$\alpha(G)$ &  $1$ & $3$ &$2$& $3$& $4$ \\
\hline
\hline
\end{tabular}
\end{center}
The conclusion now follows from the values in the table.
\end{proof}

As in Brown and Hoshino~\cite{Brown11}, we will use the following result to extend 
Theorem~\ref{cubicmainthm} to all circulant cubic graphs.  The following classification 
is due to Davis and Domke \cite{DD}.

\begin{theorem}\label{3circulant}
Let $G = C_{2n}(a,n)$ with $1 \leq a < n$, and let $t = \gcd(a,2n)$.
\begin{enumerate}
\item[$(i)$] If $\frac{2n}{t}$ is even, then $G$ is isomorphic to $t$ copies of $C_{\frac{2n}{t}}(1,\frac{n}{t})$.
\item[$(ii)$] If $\frac{2n}{t}$ is odd, then $G$ is isomorphic to $\frac{t}{2}$ copies
of $C_{\frac{4n}{t}}(2,\frac{2n}{t})$.
\end{enumerate}
\end{theorem}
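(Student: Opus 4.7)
The plan is to exploit the fact that $G = C_{2n}(a,n)$ is the Cayley graph of $\mathbb{Z}/2n\mathbb{Z}$ with connection set $S = \{a, -a, n\}$ (noting $n \equiv -n \pmod{2n}$). The vertex sets of the connected components are the cosets of $H = \langle S \rangle$, and all components are mutually isomorphic. In a cyclic group, $\langle a, n \rangle = \langle \gcd(a, n, 2n) \rangle = \langle d \rangle$, where $d := \gcd(a, n)$, so $|H| = 2n/d$ and $G$ has exactly $d$ components. To relate $d$ to $t := \gcd(a, 2n)$, I would note that $d \mid t \mid 2d$, and that $t \mid n$ if and only if $2n/t$ is even. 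Hence $d = t$ in case $(i)$ and $d = t/2$ in case $(ii)$, matching the stated component counts. It remains to identify the identity component $H$: under the isomorphism $H \to \mathbb{Z}/(2n/d)\mathbb{Z}$ sending $dk \mapsto k$, the connection set transports to $\{a/d, -a/d, n/d\}$.

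In case $(i)$ ($d = t$), the component is $C_{2n/t}(a/t, n/t)$. Since $t = \gcd(a, 2n)$ gives $\gcd(a/t, 2n/t) = 1$, the element $a/t$ is a unit modulo $2n/t$, and multiplication by $(a/t)^{-1}$ is a group automorphism sending $a/t \mapsto 1$. Because $2n/t$ is even, every unit is odd, so $(a/t)^{-1}$ fixes $n/t = (2n/t)/2$. This yields the isomorphism onto $C_{2n/t}(1, n/t)$.

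In case $(ii)$, set $M = 4n/t$ and $N = 2n/t$ (odd). The component is $C_M(2a/t, N)$, and the goal is to send the connection set $\{2a/t, -2a/t, N\}$ to $\{2, -2, N\}$ by a group automorphism of $\mathbb{Z}/M\mathbb{Z}$. Since $\gcd(a/t, N) = 1$, the element $a/t$ has an inverse $v$ modulo $N$. The key step is to lift $v$ to a unit $u$ of $\mathbb{Z}/M\mathbb{Z}$: replace $v$ by $v + N$ if $v$ is even, using that $N$ is odd to flip parity. Then $u$ is odd and coprime to $N$, hence coprime to $M = 2N$. Multiplication by $u$ sends $2a/t \mapsto 2 \pmod M$, and the oddness of $u$ forces $uN \equiv N \pmod{2N}$, producing the desired isomorphism onto $C_{4n/t}(2, 2n/t)$.

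The principal obstacle is the unit-lifting parity argument in case $(ii)$: one must realize $(a/t)^{-1}$ modulo $N$ by an element of $\mathbb{Z}/M\mathbb{Z}$ that is simultaneously a unit mod $M$ and preserves the $N$-edge. This is exactly where the hypothesis that $2n/t$ is odd enters essentially, and it is what forces a $2$ rather than a $1$ in the target circulant.
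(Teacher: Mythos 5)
Your proof is correct, but note that the paper itself offers no proof of this statement: it is quoted as a known classification of Davis and Domke \cite{DD}, so there is nothing internal to compare against. Your argument is a clean, self-contained derivation via Cayley graphs: the components are the cosets of $H=\langle a,n\rangle=\langle \gcd(a,n)\rangle$, the bookkeeping $d\mid t\mid 2d$ together with ``$t\mid n$ iff $2n/t$ is even'' correctly yields $d=t$ in case $(i)$ and $d=t/2$ in case $(ii)$, and the rescaling of the identity component by $\gcd(a,n)$ transports the connection set to $\{a/d,-a/d,n/d\}$. The normalization by multiplication by a unit is also right in both cases: in case $(i)$ the unit $(a/t)^{-1}$ is odd because the modulus $2n/t$ is even, hence fixes the half-modulus element $n/t$; in case $(ii)$ the parity-adjusted lift $u$ of $(a/t)^{-1}$ modulo the odd number $N=2n/t$ is a unit modulo $2N$ sending $2a/t\mapsto 2$ and fixing $N$. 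One could add the (easy) observation that the degenerate possibilities $n/t=1$ and $2n/t=1$ cannot occur because $1\le a<n$ forces $t<n$, so the target circulants are always legitimately labelled, but this is a cosmetic point rather than a gap.
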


We also use the following lemma in the next proof.

\begin{lemma}[{\cite[Proposition 6.2.8]{Vbook}}]\label{disjoint}
Suppose that the graph $G = H \cup K$ where $H$ and $K$ are disjoint components
of $G$.  Then $G$ is Cohen-Macaulay if and only if $H$ and $K$ are Cohen-Macaulay. 
\end{lemma}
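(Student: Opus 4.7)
The plan is to reduce the lemma to a standard additivity property for tensor products of finitely generated $k$-algebras. Since $V_H$ and $V_K$ are disjoint, we may split the polynomial ring as $R = R_H \otimes_k R_K$, where $R_H = k[V_H]$ and $R_K = k[V_K]$. No edge of $G$ crosses between $H$ and $K$, so $I(G) = I(H)R + I(K)R$, and consequently
\[
R/I(G) \;\cong\; \bigl(R_H/I(H)\bigr) \otimes_k \bigl(R_K/I(K)\bigr).
\]
Equivalently, in the language used throughout the paper, $\operatorname{Ind}(G)$ is the simplicial join $\operatorname{Ind}(H) * \operatorname{Ind}(K)$.

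The main step is to establish that both Krull dimension and projective dimension are additive across this tensor product. Additivity of Krull dimension is standard for finitely generated $k$-algebras, and in the present setting it also follows combinatorially from $\alpha(G) = \alpha(H) + \alpha(K)$. For projective dimension, one checks that if $\mathbf{F}_\bullet \to R_H/I(H)$ and $\mathbf{G}_\bullet \to R_K/I(K)$ are minimal graded free resolutions over $R_H$ and $R_K$ respectively, then the tensor product complex $\mathbf{F}_\bullet \otimes_k \mathbf{G}_\bullet$ is a minimal graded free resolution of $R/I(G)$ over $R$, yielding
\[
\operatorname{pdim}_R R/I(G) \;=\; \operatorname{pdim}_{R_H} R_H/I(H) + \operatorname{pdim}_{R_K} R_K/I(K).
\]
Combining these two additivity statements with the Auslander--Buchsbaum equality on each side, and using the identity $|V_G| = |V_H| + |V_K|$, one obtains
\[
\operatorname{depth} R/I(G) \;=\; \operatorname{depth} R_H/I(H) + \operatorname{depth} R_K/I(K).
\]

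The lemma then follows at once: $R/I(G)$ is Cohen--Macaulay if and only if $\dim R/I(G) = \operatorname{depth} R/I(G)$, and by the additivity established above, together with the general inequality $\operatorname{depth} \leq \dim$ on each factor, this equality holds precisely when both $R_H/I(H)$ and $R_K/I(K)$ are Cohen--Macaulay. The main technical point to verify is the minimality of the tensor product resolution: the matrix entries of its differentials lie in $\mathfrak{m}_{R_H} R + \mathfrak{m}_{R_K} R = \mathfrak{m}_R$, precisely because $V_H \cap V_K = \emptyset$ prevents unit entries from being introduced. A fully combinatorial alternative would instead apply Reisner's Criterion (Theorem~\ref{reisner}) to the join $\operatorname{Ind}(H) * \operatorname{Ind}(K)$, exploiting the identity $\operatorname{link}_{\Delta_1 * \Delta_2}(F_1 \cup F_2) = \operatorname{link}_{\Delta_1}(F_1) * \operatorname{link}_{\Delta_2}(F_2)$ together with a K\"unneth-type decomposition of the reduced homology of a join.
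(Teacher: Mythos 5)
The paper offers no proof of this lemma at all---it is quoted directly from Villarreal's \emph{Monomial Algebras} (Proposition 6.2.8)---and your argument is correct and is essentially the standard proof given there: the isomorphism $R/I(G) \cong (R_H/I(H)) \otimes_k (R_K/I(K))$, additivity of Krull dimension and of projective dimension (via the minimal tensor-product resolution, whose exactness follows from the K\"unneth formula over the field $k$), and the Auslander--Buchsbaum formula. All the steps you flag as needing verification (minimality of the tensored resolution, $\operatorname{depth}\leq\dim$ on each factor forcing both equalities) do go through, so nothing is missing.
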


\begin{theorem}\label{CMcubic}
Let $G = C_{2n}(a,n)$ with $1 \leq a <n$, that is, $G$ is a cubic circulant graph. Let $t = gcd(a,2n)$.  Then $G$
is Cohen-Macaulay if and only if $\frac{2n}{t} = 3$ or $4$.
\end{theorem}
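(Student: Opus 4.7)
The plan is to combine three already-established results: the Davis--Domke structure theorem (Theorem \ref{3circulant}), which describes the connected components of an arbitrary cubic circulant $C_{2n}(a,n)$; the reduction to connected components via Lemma \ref{disjoint}; and the complete list of Cohen--Macaulay connected cubic circulants from Theorem \ref{cubicmainthm}, namely $C_4(1,2)$ and $C_6(2,3)$.

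First I would split into the two cases of Theorem \ref{3circulant} according to the parity of $\frac{2n}{t}$. If $\frac{2n}{t}$ is even, then $G$ decomposes into $t$ disjoint copies of the connected cubic circulant $C_{\frac{2n}{t}}(1,\frac{n}{t})$, which has the form $C_m(1,m/2)$ with $m=\frac{2n}{t}$. By Lemma \ref{disjoint}, $G$ is Cohen--Macaulay if and only if this single component is Cohen--Macaulay. Among the graphs $C_m(1,m/2)$ appearing in the Brown--Hoshino list (Theorem \ref{Brownscubic}), only $C_4(1,2)$ is Cohen--Macaulay by Theorem \ref{cubicmainthm}; $C_6(1,3)$ and $C_8(1,4)$ are not, and for all other even $m$ the graph $C_m(1,m/2)$ is not even well-covered, hence not Cohen--Macaulay by Lemma \ref{wellcovered}. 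Thus in this case $G$ is Cohen--Macaulay if and only if $\frac{2n}{t}=4$.

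Next I would handle the case where $\frac{2n}{t}$ is odd. Then $G$ is isomorphic to $\frac{t}{2}$ disjoint copies of $C_{\frac{4n}{t}}(2,\frac{2n}{t})$, a graph of the form $C_{2m}(2,m)$ with $m = \frac{2n}{t}$ odd. Again by Lemma \ref{disjoint} we reduce to asking which such components are Cohen--Macaulay. From Theorem \ref{Brownscubic} the only well-covered such graphs are $C_6(2,3)$ and $C_{10}(2,5)$, and of these only $C_6(2,3)$ is Cohen--Macaulay by Theorem \ref{cubicmainthm}. All other odd values of $m$ give graphs that are not well-covered and thus not Cohen--Macaulay. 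So in this case $G$ is Cohen--Macaulay if and only if $\frac{2n}{t}=3$.

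Combining the two cases gives exactly the criterion $\frac{2n}{t}\in\{3,4\}$ claimed in the theorem. There is essentially no technical obstacle here beyond careful bookkeeping: the decomposition theorem of Davis--Domke converts the problem to the connected case, the disjoint-union lemma converts Cohen--Macaulayness of the union to Cohen--Macaulayness of each component, and the already-proved classification in Theorem \ref{cubicmainthm} pinpoints exactly which connected components qualify. The only subtlety to watch is making sure the two parities in Theorem \ref{3circulant} correspond precisely to the two surviving cases $\frac{2n}{t}=4$ (even) and $\frac{2n}{t}=3$ (odd), which they do.
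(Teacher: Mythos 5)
Your proposal is correct and follows essentially the same route as the paper: split on the parity of $\frac{2n}{t}$ via the Davis--Domke decomposition (Theorem \ref{3circulant}), reduce to a single connected component with Lemma \ref{disjoint}, and invoke the classification of Cohen--Macaulay connected cubic circulants in Theorem \ref{cubicmainthm}. The appeal to well-coveredness for components outside the Brown--Hoshino list is harmless but unnecessary, since Theorem \ref{cubicmainthm} already disposes of every connected cubic circulant other than $C_4(1,2)$ and $C_6(2,3)$.
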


\begin{proof}
Suppose that $\frac{2n}{t} \neq 3$ or $4$.  If $\frac{2n}{t}$ is even, then 
$C_{\frac{2n}{t}}(1,\frac{n}{t})$ is not Cohen-Macaulay by Theorem \ref{cubicmainthm} and if 
 $\frac{2n}{t}$ is odd, then $C_{\frac{4n}{t}}(2,\frac{2n}{t})$ is also not Cohen-Macaulay
by Theorem \ref{cubicmainthm}.  Thus by Lemma \ref{disjoint} $G$ is not Cohen-Macaulay.
Conversely, if $\frac{2n}{t} = 4$, then by Theorem \ref{3circulant}, $G$ is isomorphic to
$t$ copies of $C_4(1,2)$ and if $\frac{2n}{t} = 3$, then $G$
is isomorphic to $\frac{t}{2}$ copies of $C_6(2,3)$.  In both cases, Theorem
\ref{cubicmainthm} and Lemma \ref{disjoint} imply $G$ is Cohen-Macaulay.   
\end{proof}

\section{Concluding Comments and Open Questions}

The question of classifying {\it all} Cohen-Macaulay circulant graphs
$C_n(S)$ is probably an intractable problem.  Even the weaker question
of determining whether or not a circulant graph $G_n(S)$ is well-covered
(equivalently, ${\rm Ind}(C_n(S))$ is a pure simplicial complex)
was shown by Brown and Hoshino to be co-NP-complete \cite[Theorem 2.5]{Brown11}.
At present, the best we can probably expect is to identify families of
Cohen-Macaulay circulant graphs.

Brown and Hoshino observed that circulant graphs behave well with respect
to the lexicographical product.  Recall this construction:

\begin{definition}
Given two graphs $G$ and $H$, the {\it lexicographical product}, denoted $G[H]$, 
is graph with vertex set $V(G) \times V(H)$, where any two vertices 
$(u,v)$ and $(x,y)$ are adjacent in $G[H]$ if and only if either 
$\{u,x\} \in G$ or $u=x$ and $\{v,y\} \in H$.
\end{definition}

When $G$ and $H$ are both circulant graphs, then the lexicographical
product $G[H]$ is also circulant (see \cite[Theorem 4.6]{Brown11}).  The
well-covered property is also preserved with respect to the lexicographical
product (see \cite{TV}).

\begin{theorem}  \label{lexwellcovered}
Let $G$ and $H$ be two non-empty graphs.  Then $G[H]$ is well-covered
if and only if the graphs $G$ and $H$ are well-covered.
\end{theorem}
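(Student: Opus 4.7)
The plan is to exploit the explicit description of independent sets in a lexicographical product. My first step would be to establish the following structural observation: a set $W \subseteq V(G) \times V(H)$ is independent in $G[H]$ if and only if its projection $I = \{u \in V(G) : (u,v) \in W \text{ for some } v\}$ is independent in $G$ and, for each $u \in I$, the fiber $W_u = \{v \in V(H) : (u,v) \in W\}$ is independent in $H$. This is immediate from the adjacency rule: an edge of $G[H]$ either forces a $G$-edge between distinct first coordinates or, when the first coordinates agree, an $H$-edge between the second coordinates. From this I would deduce that $W$ is a \emph{maximal} independent set of $G[H]$ if and only if $I$ is maximal in $G$ and each $W_u$ is maximal in $H$: any failure of maximality on either coordinate lets one append a new pair. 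Consequently $|W| = \sum_{u \in I} |W_u|$, a sum whose terms are cardinalities of maximal independent sets of $H$ indexed by a maximal independent set of $G$.

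For the forward implication, if $G$ and $H$ are well-covered then every such sum equals $\alpha(G) \cdot \alpha(H)$, so every maximal independent set of $G[H]$ has the same size and $G[H]$ is well-covered.

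For the converse, I would argue by contrapositive. If $G$ has two maximal independent sets $I_1$ and $I_2$ with $|I_1| \neq |I_2|$, fix any maximal independent set $W$ of $H$ and form $\bigcup_{u \in I_j} \{u\} \times W$ for $j=1,2$; by the structural lemma these are maximal independent sets of $G[H]$ of sizes $|I_1|\cdot|W|$ and $|I_2|\cdot|W|$, contradicting well-coveredness of $G[H]$. If instead $H$ has two maximal independent sets $W_1$ and $W_2$ with $|W_1| \neq |W_2|$, fix a maximal independent set $I$ of $G$, choose one vertex $u_0 \in I$, fix a single maximal independent set $W'$ of $H$, and form the two maximal independent sets of $G[H]$ obtained by assigning $W_j$ to $u_0$ and $W'$ to every $u \in I \setminus \{u_0\}$. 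Their cardinalities differ by $|W_1|-|W_2|\neq 0$, again a contradiction. (The use of $u_0$ requires $I$ to be nonempty, which is why we assume $G$ is nonempty.)

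The main obstacle is simply writing the structural lemma carefully enough that the counting argument is transparent; once the bijection between maximal independent sets of $G[H]$ and pairs $(I,\{W_u\}_{u\in I})$ is in hand, both directions reduce to elementary observations about sums of positive integers.
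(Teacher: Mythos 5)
Your proposal is correct. The structural lemma is right: under the adjacency rule of $G[H]$, a set $W$ is independent exactly when its projection $I$ to $V(G)$ is independent and every fiber $W_u$ is independent in $H$, and maximality passes to both coordinates (to extend a non-maximal $I$ you need only attach an arbitrary vertex of the non-empty $H$ as second coordinate, and to extend a non-maximal fiber you enlarge that fiber alone). The counting identity $|W| = \sum_{u \in I} |W_u|$ then gives the forward direction immediately, and your two contrapositive constructions for the converse are both valid --- the only hypotheses used are exactly the non-emptiness assumptions you flag, which guarantee $|W| \geq 1$ in the first construction and the existence of $u_0$ in the second. Note that the paper does not prove this theorem at all: it is imported from Topp and Volkmann \cite{TV} with a citation only, so your argument supplies a self-contained elementary proof of a result the paper treats as a black box.
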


As a consequence, the families of well-covered circulant graphs discovered
in \cite{Brown11} can be combined into new well-covered circulant graphs using the
lexicographical product.  
It is therefore natural to ask if the lexicographical product allows us
to build new Cohen-Macaulay circulant graphs from known Cohen-Macaulay circulant
graphs.  In other words, can we replace 
``well-covered'' in Theorem \ref{lexwellcovered} by ``Cohen-Macaulay''.
This turns out not to always be the case, as the following example shows.

\begin{example}
Let $G$ and $H$ be the Cohen-Macaulay circulant graphs $G=C_2(1)$ and 
$H=C_5(1)$. Then $G[H]=C_{10}(1,4,5)$ and $H[G]=C_{10}(1,2,3,5)$ as seen in Figure~\ref{lexgraphs}.
\begin{figure}[h]
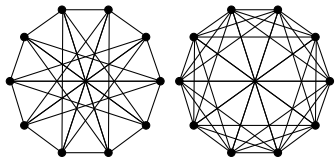

\[\Circulant{10}{1,4,5}~~\Circulant{10}{1,2,3,5}
\]
\caption{Lexicographical products $C_2[C_5]$ and 
$C_5[C_2]$}\label{lexgraphs}
\end{figure}
We can compute $\alpha(G)$ for the graphs in Figure~\ref{lexgraphs} by inspection; on the
other hand, we compute the projective dimension using  {\it Macaulay 2} ~\cite{Mt}.
We find that $\alpha(G[H]) = \dim(R/I(G[H])) = 2 = n-\operatorname{pdim}(R/I(G[H]))$, so
$G[H]$ is Cohen-Macaulay.  However,
$\alpha(H[G]) = \dim (R/I(H[G]))=2 > n - \operatorname{pdim}(R/I(H[G])) = 1$,
so $H[G]$ is not Cohen-Macaulay.
\end{example}

In light of the above example, we can ask what conditions on $G$ and 
$H$ allow us to conclude that the lexicographical product $G[H]$ is Cohen-Macaulay.

We end with a question concerning Lemma \ref{technicallemma}.   Using 
{\it Macaulay 2}~\cite{Mt}, we found that 
$\dim_k \tilde{H}_2(\Delta;k) = \frac{4d+3}{3}\binom{d-1}{2}$
for $d=1,\ldots,14$.  This suggests that the inequality of
Lemma \ref{technicallemma} is actually an equality.  We 
wonder if this is indeed true.

\noindent
{\bf Acknowledgements.}
We thank Brydon Eastman for writing the \LaTeX\ 
code to produce circulant graphs, and Russ Woodroofe and 
Jennifer Biermann for useful discussions.



\end{document}